\newtheorem{theorem}{Theorem}[section]
\newtheorem{lemma}[theorem]{Lemma}
\theoremstyle{definition}
\theoremstyle{remark}
\numberwithin{equation}{section}
\newcommand{\bD}{\mathbb{D}}
\newcommand{\ds}{\displaystyle}
\newcommand{\lb}{\linebreak}
\newcommand{\abs}[1]{\left\lvert #1 \right\rvert}
\def\thmhead@plain#1#2#3{%
  \thmname{#1}\thmnumber{\@ifnotempty{#1}{ }\@upn{#2}}%
  \thmnote{{\the\thm@notefont#3}}}
\let\thmhead\thmhead@plain
\begin{document}

\title[{Koebe's theorem for trinomials with fold symmetry}]
{On the Koebe quarter theorem\\ for trinomials with fold symmetry}

\author{Dmitriy Dmitrishin}
\address{Department of Applied Mathematics,
Odessa National Polytechnic University,
Odessa, Ukraine, 65044}
\email{dmitrishin@op.edu.ua}

\author{Daniel Gray}
\address{Department of Mathematical Sciences,
Georgia Southern University,
Statesboro, GA, 30460}
\email{dagray@georgiasouthern.edu}

\author{Alexander Stokolos}
\address{Department of Mathematical Sciences,
Georgia Southern University,
Statesboro, GA, 30460}
\email{astokolos@georgiasouthern.edu}





\begin{abstract}
The Koebe problem for univalent polynomials with real coefficients is fully solved only for trinomials, which means that in this case the Koebe radius and the extremal polynomial (extremizer) have been found. The general case remains open, but conjectures have been formulated. The corresponding conjectures have also been hypothesized for univalent polynomials with real coefficients and $T$-fold rotational symmetry. This paper provides confirmation of these hypotheses for trinomials $z + az^{T + 1} + bz^{2T + 1}$. Namely, the Koebe radius is $r=4\cos^2 \frac{\pi(1+T)}{2+3T}$, and the only extremizer of the Koebe problem is the trinomial
\begin{gather*}
B^{(T)}(z)=z+\frac2{2+3T}\left(-T+(2+2T)\cos\frac{\pi T}{2+3T}\right)z^{1+T}+\\
+\frac1{2+3T}\left(2+T-2T\cos\frac{\pi T}{2+3T}\right)z^{1+2T}.
\end{gather*}

\medskip
\noindent \textit{Key words and phrases.} Koebe one-quarter theorem, Koebe radius, univalent polynomial, 
trinomials with fold symmetry.
\end{abstract}
\maketitle

\section{Introduction}

This work is motivated by problems of classical geometric complex analysis, which studies various extremal properties
of functions $F(z)$ which are univalent in the central unit disk $\bD=\{z\in\mathbb{C}: \abs{z}<1\}$ and of the form
$$
F(z)=z+\sum_{j=2}^{\infty}a_j z^j,
$$
that is, normalized by $F(0)=0$, $F'(0)=1$. Traditionally, this class is denoted by the symbol $S$ (from the German word Schlicht). 

One of the first fundamental works in this theory was the 1916 paper by Ludwig Bieberbach, in which he proved 
the exact estimate for the second coefficient, namely $\abs{a_2} \le 2$. This estimate immediately implies the famous Koebe 1/4-theorem: $F\in S \Rightarrow F(\bD) \supseteq \bD_r=\{z: \abs{z}<r\}$, $r=1/4$. Further generalizations of Koebe's theorem are related to considering functions from various subclasses of the class $S$. Thus, for bounded in $\bD$
functions, i.e., those satisfying $\abs{f(z)} < M$, the Koebe radius is $r=\left(1+\sqrt{1-1/M}\right)^{-2}$ \cite{Pick}; for
convex in $\bD$ functions, the Koebe radius is $r=1/2$ \cite{Good}.

According to \cite{Good}, the Koebe domain for the family $\Omega\subset S$ is defined as the largest domain that is 
contained in the image $f(\bD)$ for every function $f\in\Omega$. The problem of finding the maximum radius of the 
central disk inscribed into the Koebe domain is called the Koebe problem. This radius is called the Koebe radius. In
\cite{Good, Kocz, Krzyz, Ign}, some examples of finding the Koebe radius for different classes of functions are given. 

For each function $F\in S$, the function $H(z)=\sqrt[T]{F(z^T)}\in S$ ($z\in\bD$, $T=1,2,\ldots$) and maps 
the unit disk to a domain with $T$-fold symmetry. Such functions are called $T$-fold symmetric functions. Using 
the Koebe 1/4-theorem, it is not difficult to obtain the Koebe radius for the $T$-fold symmetric function \cite{Reng}: 
$\ds r=\frac1{2^{2/T}}$. Note that the extremal functions (extremizers) in these problems are $T$-fold symmetric
Koebe functions, which, up to rotation, have the form
$$
K^{(T)}(z)=\frac{z}{(1-z^T)^{2/T}}=z+\sum_{j=2}^N \prod_{s=1}^{j-1}\frac{s+2/T-1}{s} z^{1+(j-1)T}.
$$
The classical Koebe function and the odd Koebe function, respectively, have the representation 
$$
K^{(1)}(z)=\frac{z}{(1-z)^2}=z+\sum_{j=2}^N jz^j, \;
K^{(2)}(z)=\frac{z}{1-z^2}=z+\sum_{j=2}^N z^{2j-1}.
$$

In \cite{Brandt,DDSUniv,DSS}, there was considered the Koebe problem for polynomials of degree $N$ 
with real coefficients:
$$
S^{(N)}=\left\{z+\sum_{j=2}^N a_jz^j:\,z+\sum_{j=2}^N a_jz^j \in S\right\}.
$$
Let $U_k(\cos \vartheta) = \frac{\sin(k + 1)\vartheta}{\sin \vartheta}$, where $\vartheta \in (-\pi,\pi]$, be the Chebyshev polynomials of the second kind ($U_0(t) = 1$, $U_1(t) = 2t$, and $U_2(t) = 4t^2 - 1$) and let 
\begin{equation}\label{Q}
Q_N(z) = z + \sum_{j = 2}^{N}B_jz^j
\end{equation}
where $B_j$ is given by
\begin{eqnarray*}
B_j &=& \frac1{(N+2)\sin\frac{2\pi}{N+2}}
\times\left((N-j+3)\sin\frac{\pi(j+1)}{N+2}-(N-j+1)\sin\frac{\pi(j-1)}{N+2}\right)
\frac{\sin\frac{\pi j}{N+2}}{\sin\frac{\pi}{N+2}} \\
&=& \frac{U'_{N-j+1}\left(\cos\frac{\pi}{N+2}\right)}{U'_N\left(\cos\frac{\pi}{N+2}\right)}
U_{j-1}\left(\cos\frac{\pi}{N+2}\right),\;j=2,\ldots,N.
\end{eqnarray*}
The estimate $\ds r\le\frac1{4}\sec^2\frac{\pi}{N+2}$ on the Koebe radius was obtained by showing that
\begin{equation}\label{pr}
\sup_{p\in S^{(N)}}(p(-1))=Q_N(-1)=-\frac1{4}\sec^2\frac{\pi}{N+2}.
\end{equation} 
Furthermore, it was shown that $Q_N(z)$ is the unique extremizer of problem~\eqref{pr}. The hypothesis that the Koebe 
radius on the class $S^{(N)}$
\begin{equation}\label{rad}
r=\frac1{4}\sec^2\frac{\pi}{N+2}
\end{equation}
was also proposed. This hypothesis was proved in \cite{Ign} for $N = 3$.

In \cite{DTS}, the Koebe problem was considered for the univalent in $\bD$ odd polynomials with real coefficients
$F(z)=z+\sum\limits_{j=2}^n a_jz^{2j-1}$ of degree $N=2n-1$. The Koebe radius is estimated as
\begin{equation}\label{rodd}
r\le\frac1{2}\sec^2\frac{\pi}{N+3},
\end{equation}
it is conjectured that this quantity in the Koebe problem is exact, and the extremizer
\begin{equation}\label{exodd}
F^{(0)}(z)=\sum_{j=1}^n \frac{U'_{N+2-j}\left(\cos\frac{\pi}{N+3}\right)}
{U'_{N+1}\left(\cos\frac{\pi}{N+3}\right)}z^{2j-1}
\end{equation}
is unique.

Moreover, the hypothesis about the exact solution of the Koebe problem for the univalent in $\bD$ polynomials 
with real coefficients $F(z)=z+\sum\limits_{j=2}^n a_jz^{1+T(j-1)}$ of degree $N=1+T(n-1)$ with $T$-fold
symmetry was also proposed there, namely, that the extremizer is unique and is given by the formula
\begin{equation}\label{BT}
B^{(T)}(z)=z+\sum_{j=2}^n b_j\gamma_jz^{1+(j-1)T},
\end{equation}
where
$$
\gamma_j=\prod_{s=1}^{j-1}\frac{\sin\pi\frac{s+2/T-1}{n+2/T}}{\sin\pi\frac{s}{n+2/T}}=
\prod_{s=1}^{j-1}\frac{U_{n-s}\left(\cos\frac{\pi}{n+2/T}\right)}{U_{s-1}\left(\cos\frac{\pi}{n+2/T}\right)}, \;
b_j=\frac{U'_{T(n-j+1)}\left(\cos\frac{\pi}{Tn+2}\right)}{U'_{Tn}\left(\cos\frac{\pi}{Tn+2}\right)},\;j=2,\ldots,n,
$$
and the Koebe radius is $r=\abs{B^{(T)}(e^{i\pi/T})}$.

The image of the unit circle under the mapping by the polynomial $B^{(3)}(z)$ when $N=7$ is given in Fig.~\ref{f1}. 

\begin{figure}[h!]
\includegraphics[scale=0.3]{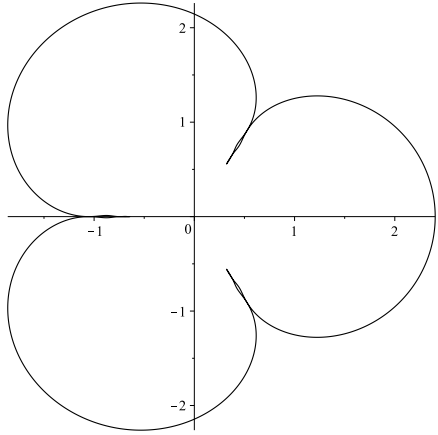}
\caption{Image of the unit circle mapped by the polynomial $B^{(3)}(z)$ $(N=7)$.} \label{f1}
\end{figure} 

Note that when $T=1,2$, polynomial \eqref{BT} coincides with polynomials \eqref{Q} and \eqref{exodd} respectively. 

The aim of this paper is to check the proposed hypotheses for trinomials of the form \eqref{BT}, that is, for the case 
$n=3$ and all $T=1,2,3,\ldots$. Note that even the simplest case $T=1$ was not trivial. The main result is the following: we will show that the trinomial
\begin{equation}\label{Btri}
B^{(T)}(z)=z+a_0z^{1+T}+b_0z^{1+2T},
\end{equation}
where
$$
a_0=\frac{U'_{2T}\left(\cos\frac{\pi}{2+2T}\right)}{U'_{3T}\left(\cos\frac{\pi}{2+2T}\right)}
\frac{\sin\frac{2\pi}{2+3T}}{\sin\frac{\pi T}{2+3T}}=
\frac2{2+3T}\left(-T+(2+2T)\cos\frac{\pi T}{2+3T}\right),
$$
$$
b_0=\frac{U'_{T}\left(\cos\frac{\pi}{2+2T}\right)}{U'_{3T}\left(\cos\frac{\pi}{2+2T}\right)}
\frac{\sin\frac{2\pi}{2+3T} \sin\frac{(2+T)\pi}{2+3T}}{\sin\frac{\pi T}{2+3T} \sin\frac{2\pi T}{2+3T}}
=\frac1{2+3T}\left(2+T-2T\cos\frac{\pi T}{2+3T}\right),
$$
is the extremizer of the Koebe problem, and 
\begin{equation}\label{rad3}
r=4\cos^2\frac{\pi(1+T)}{2+3T}
\end{equation}
is the Koebe radius. 

Note that since $\ds4\cos^2\frac{2\pi}5=\frac1{4}\sec^2\frac{\pi}5$, when $T=1$, quantities \eqref{rad3} and
\eqref{rad} coincide; and for $T=2$, quantities \eqref{rad3} and \eqref{rodd} coincide, 
i.e., $\ds4\cos^2\frac{3\pi}8=\frac1{2}\sec^2\frac{\pi}8$.

\section{Domain of univalence in the coefficient plane for trinomials with fold symmetry}

In \cite{Sch}, there was considered the problem of constructing the domain of univalence for the trinomials
$F(z)=z+az^k+bz^m$ with complex coefficients. In \cite{DSG_arxiv}, the results were refined for the special case 
$k=1+T$, $m=1+2T$ and real $a$, $b$. Let
$$
U_T=\left\{(a,b)\in\mathbb{R}^2:\;z+az^{1+T}+bz^{1+2T}\textit{ is univalent in }\bD\right\}
$$
be the domain of univalence for the trinomial $F(z)=z+az^{1+T}+bz^{1+2T}$ in the plane of the coefficients $a$, $b$.
This domain is bounded by five curves (Fig.~\ref{f2}):
\begin{gather*}
\Gamma_1=\left\{(x,y):\, x=t,\,y=\frac1{1+2T},\right.\\
\left.t\in\left[ -2\frac{1+T}{1+2T}\sin{\frac{\pi}{2+2T}}, 2\frac{1+T}{1+2T}\sin{\frac{\pi}{2+2T}} \right]\right\},
\end{gather*}
\begin{gather*}
\Gamma_2^{+}=\left\{(x,y):\,x=t,\,y=\frac{(1+T)t-1}{1+2T},\,t\in\left[0,\frac4{2+3T}\right]\right\},\\
\Gamma_2^{-}=\left\{(x,y):\,x=-t,\,y=\frac{(1+T)t-1}{1+2T},\,t\in\left[0,\frac4{2+3T}\right]\right\},\\
\Gamma_3^{+}=\left\{(x,y):\,x=\tilde{A}(t),\,y=\tilde{B}(t),\,t\in\left[0,\frac{\pi}{2+2T}\right]\right\},\\
\Gamma_3^{-}=\left\{(x,y):\,x=-\tilde{A}(t),\,y=\tilde{B}(t),\,t\in\left[0,\frac{\pi}{2+2T}\right]\right\},
\end{gather*}
where 
$$
\tilde{A}(t)=\frac{2T\sin{(2+2T)t}-(2+2T)\sin{2Tt}}{T\sin{(2+3T)t}-(2+3T)\sin{Tt}},\;
\tilde{B}(t)=\frac{T\sin{(2+T)t}-(2+T)\sin{Tt}}{T\sin{(2+3T)t}-(2+3T)\sin{Tt}}.
$$

Let us note that the domain $U_1$ is defined in \cite{Bran, Cow}; when $T=1$, the curves $\Gamma_3^{+}$, 
$\Gamma_3^{-}$ are arcs of the ellipse $a^2+4(1/2-b)^2=1$. Notice also that for $T=2$, the curves 
$\Gamma_3^{+}$, $\Gamma_3^{-}$ are arcs of the ellipses $(a\pm b)^2-4b(1-b)=0$.

\begin{figure}[h!]
\includegraphics[scale=0.55]{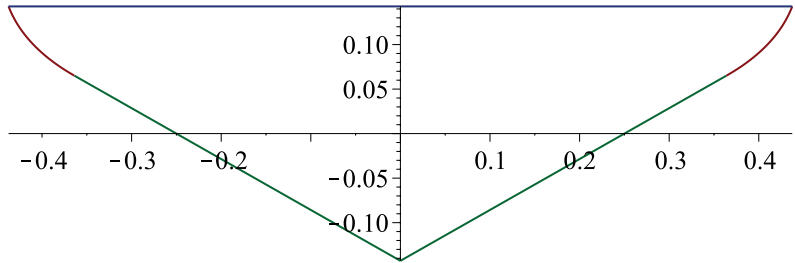}
\caption{Boundary of the domain $U_3$: the curves $\Gamma_1$ (blue), $\Gamma_2^{+}$, 
$\Gamma_2^{-}$ (green), $\Gamma_3^{+}$, $\Gamma_3^{-}$ (red).} \label{f2}
\end{figure}

Note that the domain $U_T$, considered in the coefficient plane $(a,b)$, has axial symmetry about the line $Ob$,
therefore we can restrict ourselves, without loss of generality, to the case $a\ge0$.

In what follows, it turns out to be convenient to make the substitution $\alpha=\frac1{1+T}$ 
$\left(\text{or }T=\frac1\alpha-1\right)$, and then $\alpha\in(0,1/2]$, since $T\ge1$. The boundaries of the univalence
domain after the substitution is given by the parametric equations:
\begin{gather*}
\Gamma_1=\left\{(x,y):\,x=t,\,y=\frac{\alpha}{2-\alpha},\,
t\in\left[-\frac2{2-\alpha}\sin\frac{\pi\alpha}2,\frac2{2-\alpha}\sin\frac{\pi\alpha}2\right]\right\},\\
\Gamma_2^{+}=\left\{(x,y):\,x=t,\,y=\frac{t-\alpha}{2-\alpha},\,t\in\left[0,\frac{4\alpha}{3-\alpha}\right]\right\},\\
\Gamma_2^{-}=\left\{(x,y):\,x=-t,\,y=\frac{t-\alpha}{2-\alpha},\,t\in\left[0,\frac{4\alpha}{3-\alpha}\right]\right\},\\
\Gamma_3^{+}=\left\{(x,y):\,x=A(t),\,y=B(t),\,t\in\left[0,\frac{\pi}2\right]\right\},\\
\Gamma_3^{-}=\left\{(x,y):\,x=-A(t),\,y=B(t),\,t\in\left[0,\frac{\pi}2\right]\right\},
\end{gather*} 
where $\ds A(t)=\frac{U(t)}{W(t)}$, $\ds B(t)=\frac{V(t)}{W(t)}$,
\begin{gather*}
U(t)=2\sin2(1-\alpha)t-2(1-\alpha)\sin2t,\\
V(t)=(1+\alpha)\sin(1-\alpha)t-(1-\alpha)\sin(1+\alpha)t,\\
W(t)=(3-\alpha)\sin(1-\alpha)t-(1-\alpha)\sin(3-\alpha)t.
\end{gather*}

With the new parameterization, the conjecture about the solution of the Koebe problem takes the next form. The Koebe 
radius for $T$-fold symmetric univalent trinomials is
$$
r=4\cos^2\frac{\pi}{3-\alpha};
$$
accordingly, the coefficients of the extremal trinomial are:
\begin{gather*}
a_0=-\frac{2-2\alpha}{3-\alpha}+\frac4{3-\alpha}\cos\frac{\pi(1-\alpha)}{3-\alpha}=
-2+\frac8{3-\alpha}\sin^2\frac{\pi}{3-\alpha},\\
b_0=\frac{1+\alpha}{3-\alpha}-\frac{2-2\alpha}{3-\alpha}\cos\frac{\pi(1-\alpha)}{3-\alpha}=
1-\frac{4(1-\alpha)}{3-\alpha}\sin^2\frac{\pi}{3-\alpha}.
\end{gather*}

This new form of defining the boundaries of the univalence domain is especially convenient when analyzing the 
behavior of necessary objective functions on the curve $\Gamma_3^{+}$, which is key to solving the entire Koebe 
extremal problem. 

\section{Main result}

\subsection*{\indent 3.1 Objective function} 

Let us introduce the function equal to the squared distance from zero to the boundary of the image of the central
unit disk under the mapping $F(z)=z+az^{1+T}+bz^{1+2T}$:
$$
\Phi(a,b,\varphi)=\left|F(e^{i\varphi})\right|^2=1+a^2+b^2+2a(1+b)\cos T\varphi+2b\cos2T\varphi.
$$
The Koebe problem reduces to finding the minimum of this function on the set $(a,b,\varphi)\in U_T\times[0,\pi/T]$.
Denote this minimum by $\Phi^*$.

\subsection*{\indent 3.2 Extremum on the boundary} 

First, let us find the partial derivatives
$$
\frac{\partial\Phi}{\partial a}=2a+2(1+b)\cos T\varphi,\quad
\frac{\partial\Phi}{\partial b}=2a\cos T\varphi+2b+2\cos2T\varphi
$$ 
and set them equal to zero. This gives us $a=-2\cos T\varphi$, $b=1$. Since $(-2\cos T\varphi,1)\notin U_T$, this 
means that the extremal polynomial corresponds exactly to a boundary point of the domain $U_T$. Due to the
symmetry of this region, it suffices to consider the problem of minimizing the function $\Phi(a,b,\varphi)$ only on the
curves $\Gamma_1$, $\Gamma_2^{+}$, $\Gamma_3^{+}$. 

\subsection*{\indent 3.3 Main and special directions} 

Let us call the direction $e^{i\frac{\pi}T}$ the main one. The direction $e^{i\hat{\varphi}}$, 
$0\le\hat{\varphi}<\pi/T$, we will call special if 
$\Phi(a,b,\hat\varphi)=\min\limits_{\varphi\in[0,\pi/T)}\Phi(a,b,\varphi)<\Phi(a,b,\pi/T)$, i.e., contraction of the image 
of the unit disk is stronger in the direction $e^{i\hat\varphi}$ than in the main direction.

We find $\ds\frac{\partial\Phi}{\partial\varphi}=-2aT(1+b)\sin T\varphi-4bT\sin2T\varphi$, set it equal to zero, and 
obtain $\ds\frac{a(1+b)}{4b}=-\cos T\varphi$. This implies that the special direction does not always exist since the
condition $\ds\left|\frac{a(1+b)}{4b}\right|\le1$ must be satisfied; moreover, it also follows that if the special direction
exists, then it is unique. We will see later that the special direction exists for all $(a,b)\in\Gamma_1$, and that the special direction exists only for some 
$(a,b)\in\Gamma_3^{+}$.

\subsection*{\indent 3.4 Studying contraction of the image of the unit disk in the main direction}

For our functions $F(z) = z + az^{1 + T} + bz^{1 + 2T}$, we have $F(e^{i\pi/T}) = e^{i\pi/T}(1 - a + b)$. Hence, we set $h(a,b)=-a+b$, and
$$
H(a,b)=\Phi(a,b,\pi/T)=(1+h(a,b))^2.
$$
It is clear that 
$\min\limits_{(a,b)\in U_T}H(a,b)\ge\Phi^*$. On the curve $\Gamma_1$, 
$\ds h(a,b)=-t+\frac{\alpha}{2-\alpha}$; on $\Gamma_2^+$, 
$\ds h(a,b)=-t+\frac{t-\alpha}{2-\alpha}=-\frac{(1-\alpha)t+\alpha}{2-\alpha}$. Obviously, the 
function $h(a,b)$ decreases on the curves $\Gamma_1$, $\Gamma_2^+$.

Let us study $h(a,b)$ on the curve $\Gamma_3^+$, i.e. let us examine the behavior of the function 
$h(A(t),B(t))=-A(t)+B(t)$.

According to Lemma~\ref{l1}, 
$$
A'(t)=\frac{\sin(2-\alpha)t}{\sin t}B'(t)>0,\; t\in\left(0,\frac{\pi}2\right).
$$
From this,
$$
\frac{d}{dt}h(A(t),B(t))=-A'(t)+B'(t)=\frac{B'(t)}{\sin t}(\sin t-\sin(2-\alpha)t)
=-\frac{2B'(t)}{\sin t}\cos\frac{3-\alpha}2 t\sin\frac{1-\alpha}2 t.
$$
Therefore, the function $h(A(t),B(t))$ decreases when $t\in\left(0,\frac{\pi}{3-\alpha}\right)$, increases when 
$t\in\left(\frac{\pi}{3-\alpha},\frac{\pi}2\right)$, and has a local minimum at ${t=t^*=\frac{\pi}{3-\alpha}}$. This
means that the contraction in the main direction is maximized when the trinomial coefficients take the values 
$a=A(t^*)=a_0$, $b=B(t^*)=b_0$.

Thus, 
$$
\min_{(a,b)\in U_T}H(a,b)=H(a_0,b_0)=\left[4\cos^2\frac{\pi}{3-\alpha}\right]^2=
\left[4\cos^2\frac{\pi(1+T)}{2+3T}\right]^2.
$$
The minimum is attained at a single point which corresponds to polynomial \eqref{Btri} and is equal to 
quantity \eqref{rad3} squared. 

\begin{figure}[h!]
\includegraphics[scale=0.57]{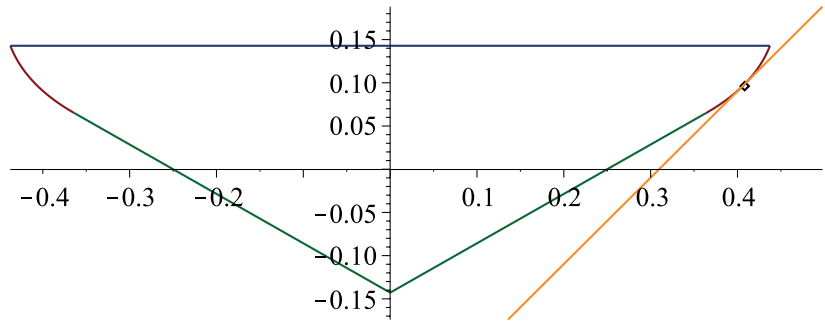}
\caption{Tangent to $\Gamma_3^+$ at the point $(a_0,b_0)$ with the equation 
$-x+y=-1+4\cos^2\frac{\pi(1+T)}{2+3T}$ $(T=3)$.} \label{f3}
\end{figure}

\subsection*{\indent 3.5 Property of the contraction quantity of the disk image in the special direction} 

Let us represent the function $\Phi(a,b,\varphi)$ in the form
$$
\Phi(a,b,\varphi)=H(a,b)+16b\cos^2\frac{T\varphi}2 \left(\frac{a(1+b)}{4b}-\sin^2\frac{T\varphi}2\right).
$$
Denote $\ds\mu(a,b)=\frac{a(1+b)}{4b}$. The behavior of this function plays a key role in determining 
the properties of the objective function $\Phi(a,b,\varphi)$.  

\subsubsection*{\indent \bf 3.5.1 Curve $\Gamma_2^+$}
\indent

On the curve $\Gamma_2^+$: for negative $b$, that is, when $t\in(0,\alpha)$, 
$$
\Phi(a,b,\varphi)=
H(a,b)+16|b|\cos^2\frac{T\varphi}2\left(\frac{a(1-|b|)}{4|b|}+\sin^2\frac{T\varphi}2\right)>H(a,b).
$$ 
For positive $b$, i.e., when $t\in\left(\alpha,\frac{4\alpha}{3-\alpha}\right]$, we have 
$$
\mu(a,b)=\frac{t\left(1+\frac{t-\alpha}{2-\alpha}\right)}{4\cdot\frac{t-\alpha}{2-\alpha}}=
\frac{t^2+2t(1-\alpha)}{4(t-\alpha)}.
$$
This function decreases in $t$, hence
$$
\mu(a,b)\ge\mu\left(\frac{4\alpha}{3-\alpha},\frac1{2-\alpha}\left(\frac{4\alpha}{3-\alpha}-\alpha\right)\right)
=\frac{2(\alpha^2-2\alpha+3)}{(3-\alpha)(1+\alpha)}\ge
\left.\frac{2(\alpha^2-2\alpha+3)}{(3-\alpha)(1+\alpha)}\right|_{\alpha=\frac12}=\frac65>1.
$$
Therefore, on $\Gamma_2^+$, there holds $\Phi(a,b,\varphi)>H(a,b)>H(a_0,b_0)$ for all 
$t\in\left(\alpha,\frac{4\alpha}{3-\alpha}\right]$. This means that there is no special direction 
on the curve $\Gamma_2^+$.

\subsubsection*{\indent \bf 3.5.2 Curve $\Gamma_1$}
\indent

Let us investigate the behavior of the function $\Phi(a,b,\hat\varphi)$, where $\hat\varphi$ determines the special 
direction on the curve $\Gamma_1$. On this curve, 
$$
a=t,\;b=\frac{\alpha}{2-\alpha},\;\mu(a,b)=\frac{t}{2\alpha},\;
t\in\left[-\frac2{2-\alpha}\sin\frac{\pi\alpha}2,\frac2{2-\alpha}\sin\frac{\pi\alpha}2\right].
$$
Recall that the special direction occurs when $\mu(a,b)=-\cos T\hat\varphi$. Hence, $\ds\cos^2\frac{T\hat\varphi}2=\frac{1-\mu(a,b)}2$,  
$\ds\sin^2\frac{T\hat\varphi}2=\frac{1+\mu(a,b)}2$. Substituting this into $\Phi(a,b,\hat\varphi)$ yields 
\begin{gather*}
\Phi(a,b,\hat\varphi)=
H(a,b)+16b\cos^2\frac{T\hat\varphi}2\left(\frac{a(1+b)}{4b}-\sin^2\frac{T\hat\varphi}2\right)=\\
=(1-a+b)^2-4b\left(1-\frac{a(1+b)}{4b}\right)^2=(1-b)^2\left(1-\frac{a^2}{4b}\right).
\end{gather*} 
Then $\ds\Phi(a,b,\hat\varphi)=\frac{4(1-\alpha)^2}{(2-\alpha)^2}\left(1-\frac{2-\alpha}{4\alpha}t^2\right),$ and since this is clearly decreasing in $t$ for $\alpha \in (0,\frac{1}{2}]$ we obtain that 
$$
\Phi(a,b,\varphi)\ge\frac{4(1-\alpha)^2}{(2-\alpha)^2}\left(1-\frac1{\alpha(2-\alpha)}\sin^2\frac{\pi\alpha}2\right).
$$
Thus, the minimum of the function $\Phi(a,b,\hat\varphi)$ is attained at such values of the coefficients that 
correspond to the common point of the curves $\Gamma_1$ and $\Gamma_3^+$. This point determines the coefficients
of the generalized Suffridge trinomial \cite{DSS} 
$$
S(z)=z+\frac2{2-\alpha}\sin\frac{\pi\alpha}2 z^{1+T}+\frac{\alpha}{2-\alpha}z^{1+2T}.
$$
This minimum equals
$$
R^2=4\left(\frac{1-\alpha}{2-\alpha}\right)^2\left(1-\frac1{\alpha(2-\alpha)}\sin^2\frac{\pi\alpha}2\right).
$$
It follows from Lemma~\ref{l2} that on the curve $\Gamma_1$, this minimum is greater than 
$\left[4\cos^2\frac{\pi}{3-\alpha}\right]^2$, which means that the minimum distance from the boundary of the image
of the unit disk to zero, under the mapping by the generalized Suffridge polynomial, exceeds quantity \eqref{Btri}:
$$
\frac{2(1-\alpha)}{2-\alpha}\sqrt{1-\frac1{\alpha(2-\alpha)}\sin^2\frac{\pi\alpha}2}>4\cos^2\frac{\pi}{3-\alpha}.
$$

Fig.~\ref{f4} shows the image of the boundary of the central unit disk under the mappings 
$F(z)=z+a_0 z^{1+T}+b_0 z^{1+2T}$ and $S(z)$; the corresponding disks are provided. It can be seen that the
special direction exists for the polynomial $S(z)$. Note that there also exists the special direction for all other points on
the curve $\Gamma_1$ (since $\mu(a,b)<1$). 

\begin{figure}[h!]
\centering
\includegraphics[scale=0.27]{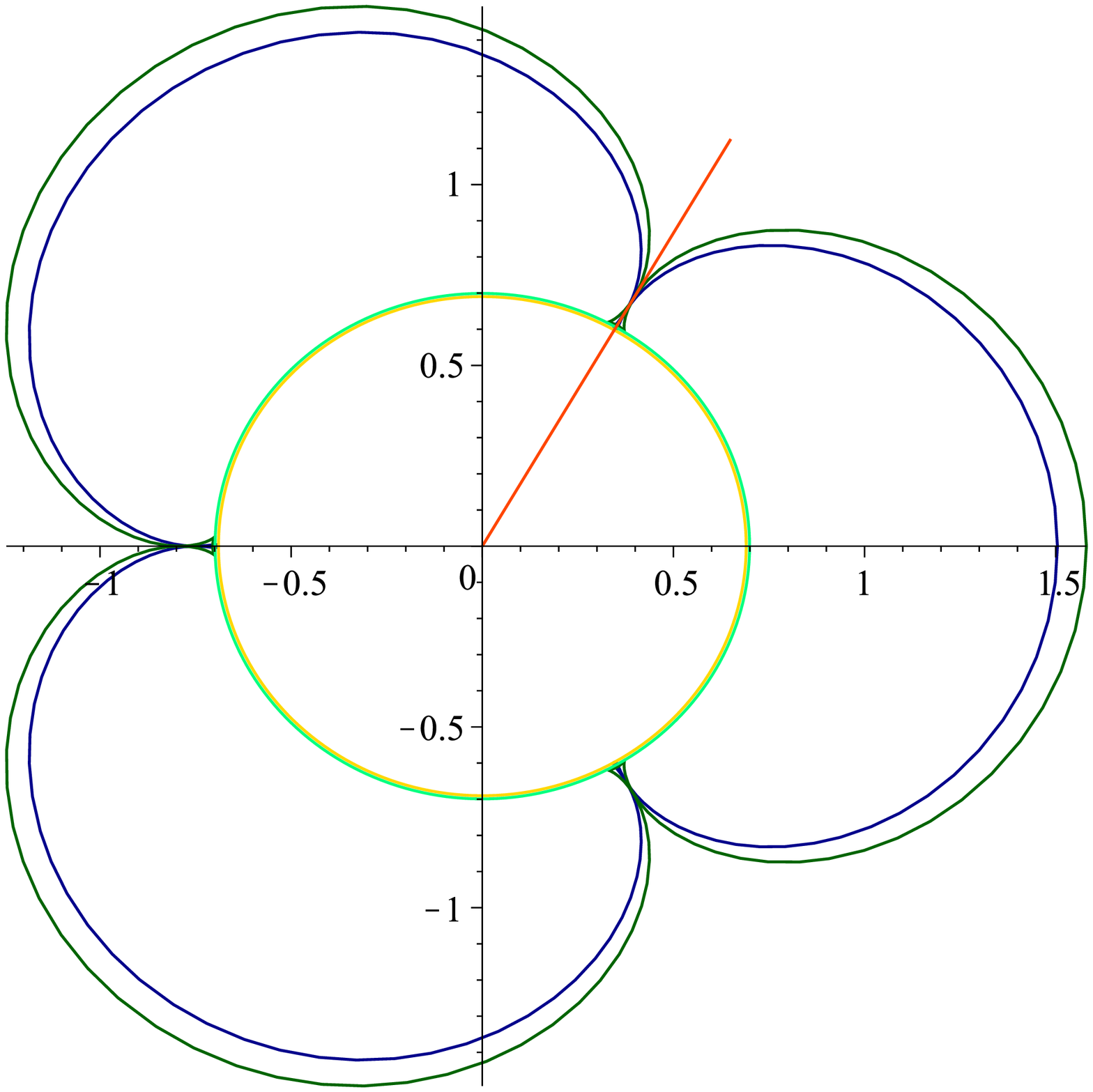}
\hspace{1cm}
\includegraphics[scale=0.27]{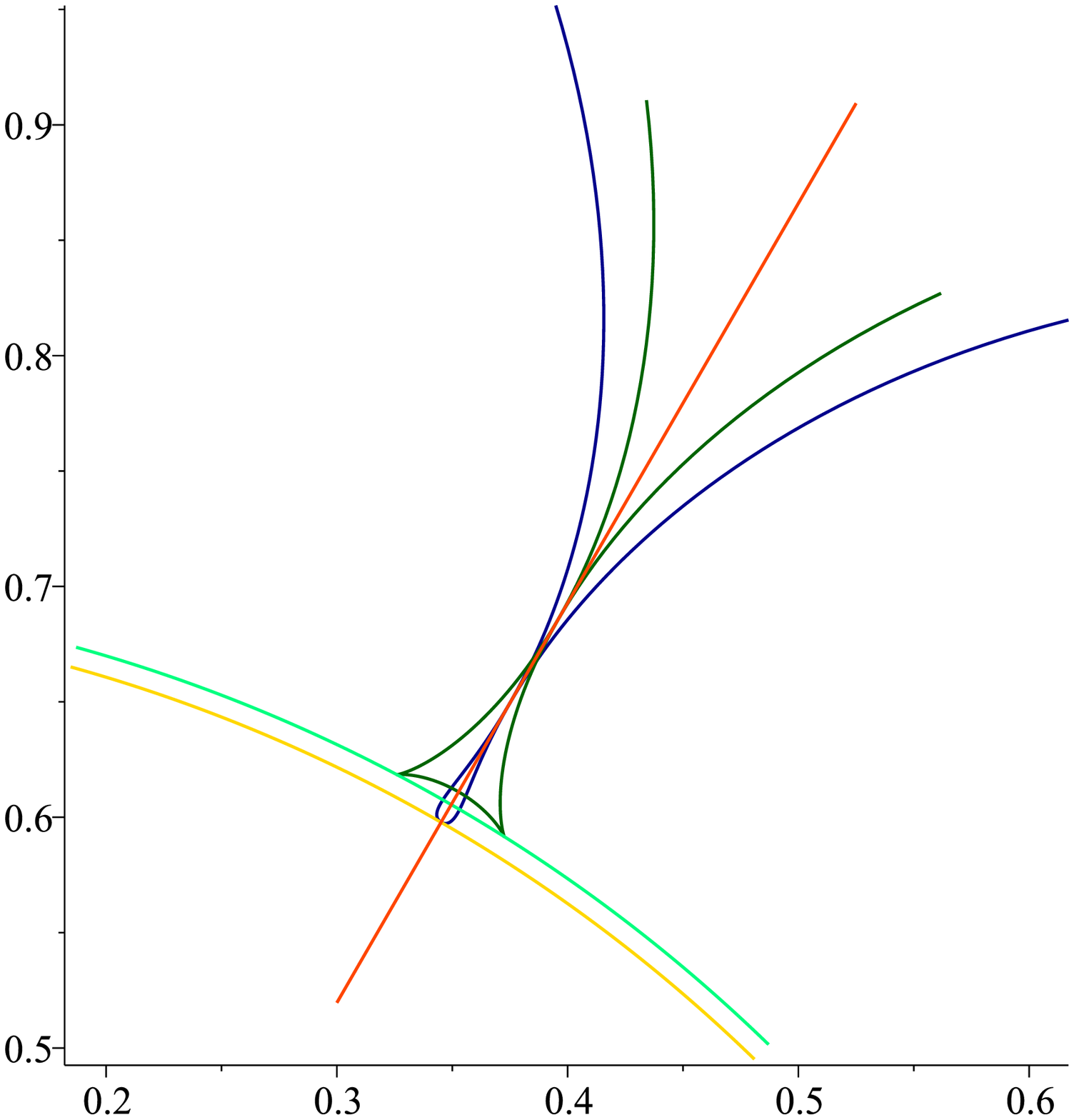}
\caption{Image of the boundary of the central unit disk under the mappings $F(z)=z+a_0 z^{1+T}+b_0 z^{1+2T}$
(blue), $S(z)$ (green); central disks of radius ${r=4\cos^2\frac{\pi}{3-\alpha}}$ (yellow), radius 
${R=\frac{2(1-\alpha)}{2-\alpha}\sqrt{1-\frac1{\alpha(2-\alpha)}\sin^2\frac{\pi\alpha}2}}$ (light green) ($T=3$,
$\alpha=1/4$).} \label{f4}
\end{figure}

\subsubsection*{\indent \bf 3.5.3 Curve $\Gamma_3^+$}
\indent

According to Lemma~\ref{l1}, the functions $A(t)$, $B(t)$ increase on the curve $\Gamma_3^+$. It follows from
Lemma~\ref{l3} that the function $\mu(a,b)=\frac{a(1+b)}{4b}$ decreases on this curve. Moreover, $\mu(a,b)>1$ at 
$t=t^*=\frac{\pi}{3-\alpha}$ (Lemma~\ref{l4}). This means that the special direction will not exist for the parameters $a = A(t)$, $b = B(t)$, $t \in [0,\tilde{t})$ and that the special direction will exist for the parameters 
$a=A(t)$, $b=B(t)$, $t\in(\tilde{t},\pi/2]$, where $\tilde{t}$ is determined by the condition 
$\ds\mu(A(t),B(t)) = \frac{A(\tilde{t})(1+B(\tilde{t}))}{4B(\tilde{t})}=1$ with $\tilde{t}>t^*$. We need to show that the distance
in the special direction is greater than $4\cos^2\frac{\pi}{3-\alpha}$, or
\begin{equation}\label{Phig}
\Phi(a,b,\hat\varphi)=(1-b)^2\left(1-\frac{a^2}{4b}\right)>\left[4\cos^2\frac{\pi}{3-\alpha}\right]^2
\end{equation}
when $a=A(t)$, $b=B(t)$, $t\in(\tilde{t},\pi/2]$. For $t\le\tilde{t}$, we have already shown that $\Phi(a_0,b_0,e^{i\pi/T})$ attains the minimum value.

In what follows, we will omit the excluded variable $\hat\varphi$; that is, the function $\Phi(a,b,\hat\varphi)$ will be 
denoted by $\Phi(a,b)$, which is given by
$$
\Phi(a,b)=(1-b)^2\left(1-\frac{a^2}{4b}\right).
$$  

Examine the possible cases. 

\begin{itemize}
\item First, consider $T=1$ $\left(\alpha=\frac12\right)$. In this case, on the curve $\Gamma_3^+$, the parameters $a$ and $b$ are 
related by the relationship $a^2+4(b-1/2)^2=1$. Then 
$$
\Phi(a,b)=(1-b)^2\left(1-\frac{1-4(b-1/2)^2}{4b}\right)=b(1-b)^2.
$$
This function is increasing when $b\le\frac13$ (in the domain $U_T$, $b\le\frac1{2T+1}$ is satisfied for all $T$),
hence, the inequality holds. 

\item Next, consider $T=2$ $\left(\alpha=\frac13\right).$ In this case, on the curve $\Gamma_3^+$, the parameters $a$ and $b$ are 
related by the relationship $(a+b)^2-4b(1-b)=0$. Then 
$$
\Phi(a,b)=(1-b)^2\left(\frac34 b+\sqrt{b(1-b)}\right).
$$
This function increases when $b\le\frac15$ (in the domain $U_2$, $b\le\frac15$). Indeed, the derivative
$$
\frac{d}{db}\left[(1-b)^2\left(\frac34 b+\sqrt{b(1-b)}\right)\right]
=\frac14 (1-b)\left(3-9b-12\sqrt{b(1-b)}+2\sqrt{\frac1b-1}\right)
$$
has three critical points: $0$, $0.21\ldots$, $1$, and in a neighborhood of zero it is positive. This implies the validity 
of the inequality.  

\item Now, we handle $T\ge7$ $\left(\alpha\in\left(0,\frac18\right]\right)$. In this case, the parameters $a$ and $b$ are not related
by a simple relationship. Thus, showing that the function $\Phi(a,b)$ increases on $\Gamma_3^+$ turns out to be difficult as we would need to build fine estimates for the function $\mu(a,b)$; a simple bound from above
by 1 and decreasing are not enough. However, inequality \eqref{Phig} can be proved more easily. 

Find
$$
\frac{\partial}{\partial a}\Phi(a,b)=-\frac{a(1-b)^2}{2b}<0,\quad
\frac{\partial}{\partial b}\Phi(a,b)=\frac{1-b}{4b^2}(a^2-8b^2+a^2b).
$$  
Lemma~\ref{l5} implies that $\frac{\partial}{\partial b}\Phi(a,b)>0$. It follows from the inequality $\mu(a,b)\le1$ 
for $t\in(\tilde{t},\pi/2]$ that $b\ge\frac{a}{4-a}$. Since the function $\Phi(a,b)$ increases in $b$, then
$$
\Phi(a,b)\ge\Phi\left(a,\frac{a}{4-a}\right)=\frac{(2-a)^4}{(4-a)^2}.
$$
The function $\ds\frac{(2-a)^2}{4-a}$ is positive and decreases in $a$, hence
$$
\Phi(a,b)\ge\frac{(2-A(\pi/2))^4}{(4-A(\pi/2))^2}=
\left(\frac{2\big(2-\alpha-\sin(\pi\alpha)/2\big)^2}{(2-\alpha)(4-2\alpha-\sin(\pi\alpha)/2)}\right)^2.
$$
From Lemma~\ref{l6}, we obtain that 
$$
\frac{2(2-\alpha-\sin(\pi\alpha)/2)}{(2-\alpha)(4-2\alpha-\sin(\pi\alpha)/2)}>4\cos^2\frac{\pi}{3-\alpha}
$$ 
for $\alpha\in(0,1/8]$, or $\Phi(a,b)>\left[4\cos^2\frac{\pi}{3-\alpha}\right]^2$.

\item The remaining cases are $T\in\{3,4,5,6\}$ $\left(\alpha\in\left\{\frac14,\frac15,\frac16,\frac17\right\}\right)$. The strategy employed for each of these cases is identical. An algorithm for verifying inequality \eqref{Phig} follows:
\begin{enumerate}[label={\bf \arabic*)}] \item Consider the inequality $\ds\frac{(2-A(t))^2}{4-A(t)}\ge4\cos^2\frac{\pi}{3-\alpha}$. Compute 
a $t_0$ for which the inequality holds and for which the value of the difference 
$$
\delta_0=\frac{(2-A(t))^2}{4-A(t)}-4\cos^2\frac{\pi}{3-\alpha}
$$ 
is sufficiently small. This means that inequality \eqref{Phig} is satisfied on the interval $t\in(\tilde{t},t_0]$) since $\frac{(2 - A(t))^2}{4 - A(t)}$ decreases as $A(t)$ increases (and $A(t)$ increases as $t$ increases by Lemma~\ref{l1}).
\item Calculate the quantity $\ds\mu_0=\frac{A(t_0)(1+B(t_0))}{4B(t_0)}$. Since $\mu(a,b)$ is decreasing in $t$ this implies $\mu(a,b)<\mu_0$ when 
$t\in(t_0,\pi/2]$. Hence, the inequality 
$$
b>\frac{a/\mu_0}{4-a/\mu_0}=\frac{a}{4\mu_0-a}
$$ 
holds.
\item Construct the function 
$$
\Phi\left(a,\frac{a}{4\mu_0-a}\right)=\frac{(2\mu_0-a)^2}{(4\mu_0-a)^2}(a^2-4\mu_0a+4).
$$
Since $\frac{\partial \Phi}{\partial b} > 0$ by Lemma~\ref{l5},  it follows that $\ds\Phi(a,b)\ge\frac{(2\mu_0-a)^2}{(4\mu_0-a)^2}(a^2-4\mu_0a+4)$. Now, we must verify that the function $\frac{(2\mu_0-a)^2}{(4\mu_0-a)^2}(a^2-4\mu_0a+4)$ is decreasing in $a$. It follows
from the inequality $b\le B(\pi/2)=\frac{\alpha}{2-\alpha}$ that $\frac{1+b}{4b}\ge\frac1{2\alpha}\ge2$. This implies $\mu_0>\mu(a,b)\ge2a$. Thus,
\begin{gather*}
\frac{d}{da}\left[\frac{(2\mu_0-a)^2}{(4\mu_0-a)^2}(a^2-4\mu_0 a+4)\right]=\\
=2\frac{2\mu_0-a}{(4\mu_0-a)^3}\left(-16\mu_0^2(\mu_0-7/4a)-a^2(10\mu_0-a) - 8\mu_0\right)<0.
\end{gather*}
\item Check the inequality
\begin{displaymath}
\Delta=\frac{(2\mu_0-A(\pi/2))^2}{(4\mu_0-A(\pi/2))^2}\left(A^2(\pi/2)-4\mu_0A(\pi/2)+4\right)-\left[4\cos^2\frac{\pi}{3-\alpha}\right]^2>0,
\end{displaymath}
where $\ds A(\pi/2)=\frac2{2-\alpha}\sin\frac{\pi\alpha}2$. Therefore, inequality \eqref{Phig} is satisfied for all 
$t\in(\tilde{t},\pi/2]$. 
\end{enumerate}
It remains to apply the algorithm for each of the four considered cases, i.e., for 
$\alpha\in\left\{\frac14,\frac15,\frac16,\frac17\right\}$.
\begin{gather*}
\alpha=\frac14:\;t_0=1.46,\,\delta_0<2.3\cdot10^{-5},\,\mu_0=0.950\ldots,\,\Delta=0.0019\ldots>0;\\
\alpha=\frac15:\;t_0=1.49,\,\delta_0<5.4\cdot10^{-5},\,\mu_0=0.921\ldots,\,\Delta=0.0070\ldots>0;\\
\alpha=\frac16:\;t_0=1.52,\,\delta_0<1.9\cdot10^{-5},\,\mu_0=0.890\ldots,\,\Delta=0.010\ldots>0;\\
\alpha=\frac17:\;t_0=1.55,\,\delta_0<2.2\cdot10^{-6},\,\mu_0=0.857\ldots,\,\Delta=0.012\ldots>0.
\end{gather*}
Thus, it is shown that the distance in the special direction is always greater than $\ds r=4\cos^2\frac{\pi}{3-\alpha}$.
\end{itemize}

\section{Main Result}

We restate the main result for convenience here.

\begin{theorem}\label{main}
Let $B^{(T)}(z) = z + a_0z^{1 + T} + b_0z^{1+2T}$ where
$$
a_0 = \frac{2}{2 + 3T}\left(-T + (2 + 2T)\cos\left(\frac{\pi T}{2 + 3T}\right)\right), \;
b_0 =\frac{1}{2 + 3T}\left(2 + T - 2T\cos\left(\frac{\pi T}{2 + 3T}\right)\right).
$$
Then, $\ds\min_{F \in \mathcal{S}^T} \min_{z \in \bar{\mathbb{D}}} \{\abs{F(z)}\} = \abs{B^{(T)}(e^{i\pi/T})} = 4\cos^2\left(\frac{\pi(1 + T)}{2 + 3T}\right)$.
\end{theorem}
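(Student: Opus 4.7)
The plan is to recast the Koebe problem as minimizing the squared distance
$$\Phi(a,b,\varphi) = 1 + a^2 + b^2 + 2a(1+b)\cos T\varphi + 2b\cos 2T\varphi$$
over $(a,b) \in U_T$ and $\varphi \in [0,\pi/T]$, then show this minimum equals $[4\cos^2(\pi(1+T)/(2+3T))]^2$ and is attained uniquely at $(a_0,b_0,\pi/T)$. Working in the parameter $\alpha = 1/(1+T) \in (0,1/2]$ simplifies the description of the boundary of $U_T$.

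First I would rule out interior critical points by solving $\nabla_{a,b}\Phi = 0$ and observing that $(a,b)=(-2\cos T\varphi, 1) \notin U_T$, so the minimum is attained on the boundary; the reflection symmetry $a \mapsto -a$ of $U_T$ then restricts the analysis to the arcs $\Gamma_1, \Gamma_2^+, \Gamma_3^+$. On each arc, the inner minimum in $\varphi$ is attained either at the \emph{main direction} $\varphi = \pi/T$, where $H(a,b) = (1-a+b)^2$, or at a \emph{special direction} $\hat\varphi$ solving $\partial_\varphi\Phi = 0$, which exists precisely when $|\mu(a,b)| \le 1$ for $\mu := a(1+b)/(4b)$; substituting $\cos T\hat\varphi = -\mu$ yields the clean identity $\Phi(a,b,\hat\varphi) = (1-b)^2(1 - a^2/(4b))$.

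For the main direction on $\Gamma_3^+$, I would use the derivative relation $A'(t) = (\sin(2-\alpha)t/\sin t)\,B'(t)$ from Lemma~\ref{l1} to compute $\frac{d}{dt}(B(t)-A(t))$ and locate its unique interior minimum at $t^* = \pi/(3-\alpha)$; evaluating $(A(t^*),B(t^*))=(a_0,b_0)$ produces exactly $H(a_0,b_0)=[4\cos^2(\pi/(3-\alpha))]^2$. On $\Gamma_1$ and $\Gamma_2^+$, the monotonicity of $H$ in the parameter $t$ gives $H > H(a_0,b_0)$ there. For the special direction, $\Gamma_2^+$ is eliminated immediately by showing $\mu \ge 6/5 > 1$, while on $\Gamma_1$ the expression $(1-b)^2(1-a^2/(4b))$ is monotone in $t$, so its minimum occurs at $\Gamma_1 \cap \Gamma_3^+$ (the generalized Suffridge trinomial), where Lemma~\ref{l2} provides the required strict comparison.

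The main obstacle is the special-direction analysis on $\Gamma_3^+$, where I must verify $(1-B(t))^2(1-A(t)^2/(4B(t))) > [4\cos^2(\pi/(3-\alpha))]^2$ for $t \in (\tilde t,\pi/2]$ (the range in which $\mu \le 1$). I would split this into regimes matched to the transcendental complexity: for $T=1,2$, $\Gamma_3^+$ is a conic arc and the inequality collapses to a one-variable polynomial fact; for $T \ge 7$ (so $\alpha \le 1/8$), Lemma~\ref{l5} gives $\partial_b\Phi > 0$, and combined with the lower bound $b \ge a/(4-a)$ coming from $\mu \le 1$, one reduces to a single-variable inequality at $a=A(\pi/2)=(2/(2-\alpha))\sin(\pi\alpha/2)$ handled by Lemma~\ref{l6}. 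The genuinely delicate cases are $T \in \{3,4,5,6\}$: no uniform bound covers them, and I would use a hybrid scheme splitting $(\tilde t,\pi/2]$ at a computed $t_0$, replacing $\mu$ by its sharper upper bound $\mu_0 = \mu(A(t_0),B(t_0))$ on the upper piece, verifying that the resulting two-variable lower bound is decreasing in $a$, and finally checking a numerical inequality $\Delta > 0$ for each $\alpha \in \{1/4,1/5,1/6,1/7\}$. This finite case analysis is where the proof turns from conceptual to genuinely computational.
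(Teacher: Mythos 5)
Your proposal follows essentially the same route as the paper's proof: the same reduction to minimizing $\Phi(a,b,\varphi)$ on the boundary arcs $\Gamma_1$, $\Gamma_2^+$, $\Gamma_3^+$, the same main-direction/special-direction dichotomy via $\mu(a,b)$, and the same case split ($T=1,2$ via conic arcs, $T\ge 7$ via Lemmas~\ref{l5} and~\ref{l6}, and $T\in\{3,4,5,6\}$ via the $t_0$, $\mu_0$, $\Delta>0$ scheme). The outline is correct and matches the paper in every essential step.
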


The proof can be summarized as follows.
\begin{itemize}
\item We constructed the function $\Phi(a,b,\varphi) = \abs{F(e^{i\varphi})}^2$ where $F(z) = z + az^{1 + T} + bz^{1 + 2T}$. Finding the Koebe radius over $\mathcal{S}^T$ is equivalent to minimizing this quantity.
\item We showed that the minimizer of $\Phi(a,b,\varphi)$ must be on the boundary of the univalence domain. Due to symmetry, we restricted ourselves to points on the boundary with $a > 0$.
\item On $\Gamma_1$, we showed that there was a special direction, and that $\Phi(a,b,\varphi) > 4\cos^2\left(\frac{\pi}{3- \alpha}\right)$ in both the main direction and the special direction for all $(a,b)$ on $\Gamma_1$.
\item On $\Gamma_2^+$, we showed that there was no special direction, and that $\Phi(a,b,\varphi) > 4\cos^2\left(\frac{\pi}{3- \alpha}\right)$ in the main direction for all $(a,b)$ on $\Gamma_2^+$.
\item On $\Gamma_3^+$, we showed that for some $(a,b)$ there was a special direction. In particular, for $(a_0,b_0)$ there exists a special direction, $\hat{\varphi}$. Then, we demonstrated $\ds \Phi(a,b,\varphi) \geq \Phi(a_0,b_0,\hat{\varphi}) = 4\cos^2\left(\frac{\pi}{3- \alpha}\right)$ for all $(a,b)$ on $\Gamma_3^+$, where $\Phi$ is minimized at the point $(a_0,b_0)$ on $\Gamma_3^+$ in the main direction.
\end{itemize}
Thus, the Koebe radius is given by $\ds
4\cos^2\left(\frac{\pi}{3- \alpha}\right) = 4\cos^2\left(\frac{(1 + T)\pi}{2 + 3T}\right)$, and is achieved at the point $(a_0,b_0)$ on $\Gamma_3^+$ in the main direction.



\section{Appendix. Auxiliary results}

\begin{lemma}[\cite{DSS}]\label{l1}
Let $A(t)$ and $B(t)$ be defined as in the boundary $\Gamma_3^+$ of the domain of univalence $U_T$. Then,
$$
A'(t)=\frac{2(1-\alpha)G(t,\alpha)}{\alpha^2(W(t))^2}\sin(2-\alpha)t, \;
B'(t)=\frac{2(1-\alpha)G(t,\alpha)}{\alpha^2(W(t))^2}\sin t,
$$
where
$$
G(t,\alpha)=(1+\alpha)\cos(3-2\alpha)t+(3-\alpha)\cos(1-2\alpha)t-(1-\alpha)^2\cos3t-(1+\alpha)(3-\alpha)\cos t,
$$
and $G(t,\alpha)>0$ when $t\in(0,\pi/2)$, $\alpha\in(0,1/2]$. 
\end{lemma}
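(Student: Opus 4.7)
The plan is to derive the derivative formulas by explicit trigonometric simplification, then establish positivity of $G$ through an integral representation.

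Using $\cos A-\cos B = -2\sin\tfrac{A+B}{2}\sin\tfrac{A-B}{2}$, the elementary derivatives factor cleanly:
\[
U'(t)=8(1-\alpha)\sin\alpha t\sin(2-\alpha)t,\quad V'(t)=2(1-\alpha^{2})\sin\alpha t\sin t,\quad W'(t)=2(1-\alpha)(3-\alpha)\sin(2-\alpha)t\sin t.
\]
Applying the quotient rule to $A=U/W$ and pulling out the common $2(1-\alpha)\sin(2-\alpha)t$ gives
\[
A'(t)=\frac{2(1-\alpha)\sin(2-\alpha)t}{W(t)^{2}}\bigl[4\sin\alpha t\cdot W(t)-(3-\alpha)\sin t\cdot U(t)\bigr],
\]
and similarly $B'(t)=\frac{2(1-\alpha)\sin t}{W(t)^{2}}\bigl[(1+\alpha)\sin\alpha t\cdot W(t)-(3-\alpha)\sin(2-\alpha)t\cdot V(t)\bigr]$. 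To identify both brackets with $G(t,\alpha)$ (up to the stated normalization), expand each product of sines via $2\sin A\sin B=\cos(A-B)-\cos(A+B)$; collecting coefficients of $\cos t$, $\cos 3t$, $\cos(1-2\alpha)t$, $\cos(3-2\alpha)t$ recovers the stated expression for $G$ in both brackets. The equality of the two brackets amounts to the auxiliary identity $\sin t\cdot U=\sin\alpha t\cdot W+\sin(2-\alpha)t\cdot V$, verified by the same termwise comparison.

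For positivity, the key observation is that the derivative formulas integrate to
\[
U(t)=8(1-\alpha)\int_{0}^{t}\sin(2-\alpha)s\sin\alpha s\,ds,\quad W(t)=2(3-\alpha)(1-\alpha)\int_{0}^{t}\sin(2-\alpha)s\sin s\,ds.
\]
Substituting into $G=4\sin\alpha t\cdot W-(3-\alpha)\sin t\cdot U$ and combining yields the single representation
\[
G(t,\alpha)=8(3-\alpha)(1-\alpha)\int_{0}^{t}\sin(2-\alpha)s\,\bigl[\sin\alpha t\sin s-\sin t\sin\alpha s\bigr]\,ds.
\]
For $s\in(0,t)$ with $t\in(0,\pi/2)$ and $\alpha\in(0,1/2]$ one has $\sin(2-\alpha)s>0$ since $(2-\alpha)s<\pi$. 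The bracket is positive because $\varphi(x):=\sin\alpha x/\sin x$ is strictly increasing on $(0,\pi)$: writing $\varphi'(x)=h(x)/\sin^{2}x$ with $h(x)=\alpha\cos\alpha x\sin x-\sin\alpha x\cos x$, direct differentiation gives $h(0)=0$ and $h'(x)=(1-\alpha^{2})\sin\alpha x\sin x>0$ on $(0,\pi)$, whence $h>0$ and $\varphi$ is increasing. Hence $\sin\alpha t\sin s>\sin t\sin\alpha s$ for $s<t$, the integrand is positive, and $G(t,\alpha)>0$ follows.

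The main obstacle is the positivity of $G$: a direct Taylor expansion at $t=0$ only gives local information, since all derivatives of $G$ through order $5$ vanish and the first nonzero coefficient is $\tfrac{8\alpha(1-\alpha)^{2}(1+\alpha)(2-\alpha)(3-\alpha)}{45}t^{6}$, while a clean trigonometric factorization is only apparent in the special case $\alpha=1/2$, where $G=(1-\cos t)^{3}$. The integral representation sidesteps these difficulties by reducing global positivity to the elementary monotonicity of $\sin\alpha x/\sin x$.
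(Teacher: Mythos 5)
Your proof is correct, and it is worth noting that the paper itself offers no argument for this lemma: it is imported verbatim from \cite{DSS}, so your write-up is the only self-contained proof on the table. The derivatives $U'=8(1-\alpha)\sin\alpha t\sin(2-\alpha)t$, $V'=2(1-\alpha^2)\sin\alpha t\sin t$, $W'=2(1-\alpha)(3-\alpha)\sin(2-\alpha)t\sin t$ check out, and collecting coefficients of $\cos(1-2\alpha)t$, $\cos(3-2\alpha)t$, $\cos t$, $\cos 3t$ confirms that both quotient-rule brackets, $4\sin\alpha t\,W-(3-\alpha)\sin t\,U$ and $(1+\alpha)\sin\alpha t\,W-(3-\alpha)\sin(2-\alpha)t\,V$, equal $G(t,\alpha)$ exactly; your auxiliary identity $\sin t\,U=\sin\alpha t\,W+\sin(2-\alpha)t\,V$ is precisely the identity the paper itself invokes (divided by $W$) in the proof of Lemma~\ref{l3}. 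The integral representation of $G$ together with the strict monotonicity of $\sin\alpha x/\sin x$ on $(0,\pi)$ is a clean global positivity argument that avoids the degenerate Taylor expansion at $t=0$. One discrepancy to flag: with $U$, $V$, $W$ as defined in this paper, your computation gives $A'(t)=2(1-\alpha)G(t,\alpha)\sin(2-\alpha)t/(W(t))^2$, without the factor $1/\alpha^2$ in the lemma statement; a numerical check at $\alpha=1/2$, $t=\pi/3$ gives $A'=0.125$, agreeing with your formula rather than the stated one (which gives $0.5$). The spurious $1/\alpha^2$ evidently reflects a differently normalized denominator in \cite{DSS}; since it is a positive constant it affects nothing in the paper --- only the ratio $A'/B'=\sin(2-\alpha)t/\sin t$ and the signs of $A'$, $B'$ are ever used --- but your formulas are the correct derivatives of the $A$ and $B$ defined here.
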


\begin{lemma}\label{l2}
The inequality
\begin{equation}\label{inl2}
\frac{4(1-\alpha)^2}{(2-\alpha)^2}\left(1-\frac1{\alpha(2-\alpha)}\sin^2\frac{\pi\alpha}2\right)>
\left[4\cos^2\frac{\pi}{3-\alpha}\right]^2
\end{equation}
 holds true for $\alpha\in(0,1/2]$.
\end{lemma}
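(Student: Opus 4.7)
I would denote the left- and right-hand sides of~\eqref{inl2} by $L(\alpha)$ and $R(\alpha)$; both are smooth and positive on $(0,1/2]$. The first step is to pin down the boundary behavior. At $\alpha=1/2$ a direct calculation gives $L(1/2)=4/27\approx 0.1481$ and $R(1/2)=\bigl[4\cos^2(2\pi/5)\bigr]^2=(7-3\sqrt 5)/2\approx 0.1459$, so~\eqref{inl2} holds at the right endpoint. In the limit $\alpha\to 0^+$, however, $L(\alpha)\to 1$ and $R(\alpha)\to 16\cos^4(\pi/3)=1$, so~\eqref{inl2} degenerates to equality at the left endpoint; any proof must therefore be quantitative enough to survive this degeneration.

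Next, I would compare Taylor expansions around $\alpha=0$ to show the inequality holds near $0$. Using $\sin(\pi\alpha/2)=\pi\alpha/2+O(\alpha^3)$ together with the geometric-series expansion of $1/(\alpha(2-\alpha))$, a routine computation yields
\begin{equation*}
L(\alpha)=1-\left(1+\frac{\pi^2}{8}\right)\alpha+O(\alpha^2).
\end{equation*}
Writing $\pi/(3-\alpha)=\pi/3+\pi\alpha/9+O(\alpha^2)$ and expanding $\cos$ around $\pi/3$ gives
\begin{equation*}
R(\alpha)=1-\frac{4\sqrt 3\,\pi}{9}\alpha+O(\alpha^2).
\end{equation*}
Since $4\sqrt 3\,\pi/9\approx 2.419$ exceeds $1+\pi^2/8\approx 2.234$, the linear coefficient of $L-R$ is strictly positive, so~\eqref{inl2} holds on some right-neighborhood $(0,\alpha_0)$.

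To extend the estimate to the entire interval $(0,1/2]$, I would replace the transcendental functions by truncated Taylor polynomials with explicitly controlled remainders. Alternating-series bounds give a polynomial upper estimate for $\sin^2(\pi\alpha/2)$, hence a rational lower bound $\underline L(\alpha)\le L(\alpha)$; likewise, since $\pi/(3-\alpha)\in[\pi/3,2\pi/5]$ and $\cos$ is monotonic there, a rational upper bound $\overline R(\alpha)\ge R(\alpha)$ is available. The inequality then reduces to a polynomial/rational inequality on $(0,1/2]$ verifiable by Sturm's theorem or certified interval evaluation. A useful auxiliary reformulation uses $\beta:=1-\alpha\in[1/2,1)$ and the identity $\sin(\pi(1-\beta)/2)=\cos(\pi\beta/2)$ to rewrite
\begin{equation*}
L=\frac{4\beta^2\bigl(\sin^2(\pi\beta/2)-\beta^2\bigr)}{(1+\beta)^3(1-\beta)},\qquad R=16\cos^4\frac{\pi}{2+\beta},
\end{equation*}
which exposes the positivity of the numerator of $L$ via the elementary inequality $\sin(\pi\beta/2)>\beta$ on $(0,1)$, and may make a direct monotonicity analysis of $L-R$ feasible.

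The main obstacle is precisely the degeneration at the left endpoint: the gap $L-R$ is only of order $\alpha$ as $\alpha\to 0^+$, so any polynomial bound loose enough to be manipulable must nevertheless be sharp through first (indeed second) order in $\alpha$. Balancing this quantitative sharpness against the need to verify the bound uniformly across the full interval $(0,1/2]$ is the technical crux.
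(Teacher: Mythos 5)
Your strategy is essentially the paper's: replace the transcendental functions by polynomial bounds with controlled error and reduce \eqref{inl2} to a polynomial inequality certified by a root-counting theorem (you propose Sturm; the paper uses Budan on a degree-12 polynomial $\hat R$ obtained from explicit truncations $G_1(\alpha)\ge\cos^2\frac{\pi}{3-\alpha}$ and $G_2(\alpha)\ge\sin^2\frac{\pi\alpha}{2}$). Your preliminary computations check out: $L(1/2)=4/27$, $R(1/2)=(7-3\sqrt5)/2$, the degeneration $L,R\to1$ as $\alpha\to0^+$, the first-order coefficients $1+\pi^2/8$ versus $4\sqrt3\,\pi/9$, and the substitution $\beta=1-\alpha$ with $\sin(\pi\beta/2)>\beta$ are all correct. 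The observation that the inequality degenerates to equality at $\alpha=0$ is genuinely clarifying --- it explains why the paper's reduced polynomial identity carries an explicit factor $\alpha^2$ in front of $R(\alpha)$ and why the constant term of $\hat R$ must be checked to be positive.

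The gap is that the decisive step is only announced, not performed. Everything after ``To extend the estimate to the entire interval'' is a program: you do not construct the explicit rational bounds $\underline L$ and $\overline R$, you do not exhibit the resulting polynomial, and you do not run Sturm or interval arithmetic on it. That is where all of the actual content of the lemma's proof lives, and it is not routine bookkeeping: the truncation orders must be chosen so that the error terms do not swallow the margin $L-R\sim 0.185\,\alpha$ near $0$ (and the margin is also thin at $\alpha=1/2$, where $L-R\approx 0.0022$), the common factor of $\alpha^2$ must be cancelled before any positivity test can succeed, and the rounded coefficients must be perturbed in the correct direction so that the surrogate polynomial is a genuine lower bound. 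Your first-order expansion only yields \eqref{inl2} on an unspecified neighborhood $(0,\alpha_0)$, so without the executed polynomial verification nothing is proved on $[\alpha_0,1/2]$. The $\beta$-reformulation is attractive but likewise not pushed to a monotonicity proof of $L-R$. As written, the proposal is a correct and well-motivated plan that coincides with the paper's method, but it does not yet constitute a proof of the lemma.
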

\begin{proof}
Write $\ds\cos\frac{\pi}{3-\alpha}=\cos\left(\frac{\pi}3+\frac{\pi}3\frac{\alpha}{3-\alpha}\right)=
\frac12\cos\frac{\pi\alpha}{3(3-\alpha)}-\frac{\sqrt3}2\sin\frac{\pi\alpha}{3(3-\alpha)}$, whence
\begin{gather*}
\cos^2\frac{\pi}{3-\alpha}=\frac14+\frac12\sin^2\frac{\pi\alpha}{3(3-\alpha)}-
\frac{\sqrt3}4\sin\frac{2\pi\alpha}{3(3-\alpha)}<\\
<\frac14+\frac12\left(\frac{\pi\alpha}{3(3-\alpha)}\right)^2-\frac{\sqrt3}4
\left(\frac{2\pi\alpha}{3(3-\alpha)}-\frac16\left(\frac{2\pi\alpha}{3(3-\alpha)}\right)^3\right)=\\
=\frac14-\frac{\sqrt3}2\frac{\pi\alpha}{3(3-\alpha)}+\frac12\left(\frac{\pi\alpha}{3(3-\alpha)}\right)^2+
\frac{\sqrt3}3\left(\frac{\pi\alpha}{3(3-\alpha)}\right)^3=G_1(\alpha).
\end{gather*}
Next, 
$$
\sin^2\frac{\pi\alpha}2=\frac12(1-\cos\pi\alpha)<\frac{(\pi\alpha)^2}4-\frac{(\pi\alpha)^4}{48}+
\frac{(\pi\alpha)^6}{1440}=G_2(\alpha).
$$
Let us represent inequality \eqref{inl2} in the equivalent form
$$
\alpha(2-\alpha)\left(1-\frac{4(2-\alpha)^2}{(1-\alpha)^2}\cos^4\frac{\pi}{3-\alpha}\right)>\sin^2\frac{\pi\alpha}2.
$$
This inequality will certainly hold if 
\begin{equation}\label{col2}
\alpha(2-\alpha)\left(1-\frac{4(2-\alpha)^2}{(1-\alpha)^2}(G_1(\alpha))^2\right)>G_2(\alpha).
\end{equation}
Inequality \eqref{col2} is transformed into the form $\frac{10^{-7}}{1440\cdot3^5}\frac{\alpha^2}{(3-\alpha)^6(1-\alpha)^2}R(\alpha)>0,
$ where $R(\alpha)=\sum\limits_{j=0}^{12}b_j\alpha^j$ is a polynomial of degree 12, and the coefficients
$b_0$, $b_2$, $b_4$, $b_6$, $b_7$, $b_9$, $b_{11}$ are positive, and $b_1$, $b_3$, $b_5$, $b_8$, $b_{10}$,
$b_{12}$ are negative. These coefficients can be calculated with the required degree of accuracy and then estimated
by replacing them by smaller ones, for which we may leave, for instance, two decimal places. 
As a result, we obtain
\begin{gather*}
\hat{R}(\alpha)=9.42-72.60\alpha+240.39\alpha^2-444.26\alpha^3+490.13\alpha^4
-307.97\alpha^5+71.46\alpha^6+ \\+43.50\alpha^7-44.53\alpha^8
+17.92\alpha^9-3.95\alpha^{10}+0.46\alpha^{11}-0.03\alpha^{12}.
\end{gather*}

Then $R(\alpha)>\hat{R}(\alpha)$ for $\alpha\ge0$. Let us form the sequence 
$\left\{\hat{R}(\alpha),\hat{R}'(\alpha),\hat{R}''(\alpha),\hat{R}'''(\alpha),\ldots,\hat{R}^{(12)}(\alpha)\right\}$
and count the number of sign variations in this sequence for $\alpha=0$ and $\alpha=0.5$. In both cases, this number
is equal to eleven. It follows from {\it Budan's theorem} that the polynomial $\hat{R}(\alpha)$ does not have zeros 
in the interval $\alpha\in[0,1/2]$. Moreover, $\hat{R}(0)>0$. Hence, $\hat{R}(\alpha)>0$ when $\alpha\in[0,1/2]$,
which implies that inequality \eqref{inl2} is valid. The lemma is proved.
\end{proof}

\begin{lemma}\label{l3}
The function $\ds\mu(t)=\frac{A(t)(1+B(t))}{B(t)}$ decreases when $t\in[0,\pi/2]$, $\alpha\in(0,1/2]$.
\end{lemma}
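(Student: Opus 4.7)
The plan is to show $\mu'(t)<0$ by differentiating directly, collapsing the expression via Lemma~\ref{l1}, and reducing the claim to a single trigonometric inequality in $t$ and $\alpha$.

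\textbf{Step 1: Reduction via Lemma~\ref{l1}.} The quotient rule gives
$$\mu'(t)=A'(t)\,\frac{1+B(t)}{B(t)}-\frac{A(t)\,B'(t)}{B(t)^2}.$$
By Lemma~\ref{l1} we have $A'(t)=\frac{\sin((2-\alpha)t)}{\sin t}\,B'(t)$ and $B'(t)>0$ on $(0,\pi/2)$. Factoring out $B'(t)/[B(t)^2\sin t]$,
$$\mu'(t)=\frac{B'(t)}{B(t)^2\sin t}\Bigl[B(t)\bigl(1+B(t)\bigr)\sin((2-\alpha)t)-A(t)\sin t\Bigr].$$
Since $B(0)=\alpha(1+\alpha)/[(3-\alpha)(2-\alpha)]>0$ and $B$ is strictly increasing (Lemma~\ref{l1}), $B(t)>0$ on $[0,\pi/2]$; together with $B'>0$ and $\sin t>0$ on $(0,\pi/2)$, the sign of $\mu'(t)$ equals that of the bracket, and we are reduced to proving
$$A(t)\sin t \;>\; B(t)\bigl(1+B(t)\bigr)\sin((2-\alpha)t),\qquad t\in(0,\pi/2).$$

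\textbf{Step 2: Clearing denominators.} Write $A=U/W$, $B=V/W$. Then $W(t)>0$ on $(0,\pi/2)$: its leading Taylor coefficient at $0$ is $\tfrac{2}{3}(1-\alpha)(2-\alpha)(3-\alpha)>0$, and $W(\pi/2)=2(2-\alpha)\cos(\pi\alpha/2)>0$; moreover $W$ cannot vanish on $(0,\pi/2)$ because $W^2$ appears in the denominator of the expressions for $A'$ and $B'$ in Lemma~\ref{l1}. Multiplying through by $W(t)^2>0$, the inequality becomes
$$\Psi(t,\alpha):=U(t)W(t)\sin t - V(t)\bigl[V(t)+W(t)\bigr]\sin((2-\alpha)t)>0,\qquad (t,\alpha)\in(0,\pi/2)\times(0,1/2].$$

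\textbf{Step 3: The trigonometric inequality.} Standard product-to-sum manipulations rewrite
$$V(t)=2\bigl[\alpha\sin t\cos(\alpha t)-\cos t\sin(\alpha t)\bigr],\qquad W(t)=2\sin((1-\alpha)t)-2(1-\alpha)\sin t\cos((2-\alpha)t),$$
so $\Psi(t,\alpha)$ becomes a polynomial in the atoms $\sin(\alpha t),\cos(\alpha t),\sin t,\cos t$. I would first seek a factorization making positivity manifest, using the guidance of the case $\alpha=1/2$, where one computes $A=4\cos(t/2)/(3+2\cos t)$, $B=1/(3+2\cos t)$, and $\mu=8\cos(t/2)\bigl(1+2\cos^2(t/2)\bigr)/\bigl(1+4\cos^2(t/2)\bigr)$, whose monotonic decrease in $t$ follows from a short elementary computation. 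If no uniform factorization emerges, the backup is to follow the scheme used in Lemma~\ref{l2}: expand $\Psi$ in powers of $t$ (it vanishes to high order at $0$ with a positive leading coefficient), truncate with explicit remainder estimates, and conclude positivity on the bounded rectangle via a Budan-type sign-variation count plus endpoint verification.

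\textbf{Main obstacle.} The formal steps are routine once Lemma~\ref{l1} is in hand; the substantive difficulty is Step~3. The function $\Psi(t,\alpha)$ mixes the four incommensurate frequencies $1,\alpha,1-\alpha,2-\alpha$, and its sign is not evident from the raw formulas. The cleanest resolution would be a closed-form factorization exhibiting $\Psi$ as a positive combination of simple trigonometric blocks (perhaps exploiting the common $\sin((1-\alpha)t)$ structure shared by $U$, $V$, $W$); failing that, the Budan/Taylor fallback is reliable because both parameter ranges are compact, the leading-order expansion at $t=0$ is explicit, and the endpoint $t=\pi/2$ simplifies by standard boundary trigonometry.
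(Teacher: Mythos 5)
Your Steps 1--2 reproduce exactly the reduction the paper makes: differentiate, substitute $A'(t)=\frac{\sin((2-\alpha)t)}{\sin t}B'(t)$ from Lemma~\ref{l1}, and reduce to showing $A(t)\sin t>B(t)(1+B(t))\sin((2-\alpha)t)$. But the proof is not finished there, and the part you defer to Step~3 --- which you yourself flag as the ``main obstacle'' --- is precisely where the content of the lemma lies. The missing ingredient is the closed-form factorization you hoped for: one has the identity
\[
U(t)\sin t-V(t)\sin((2-\alpha)t)=W(t)\sin(\alpha t),
\qquad\text{i.e.}\qquad
A(t)\sin t-B(t)\sin((2-\alpha)t)=\sin(\alpha t),
\]
which follows from product-to-sum expansion of $U\sin t$ and $V\sin((2-\alpha)t)$ and regrouping via $\cos X-\cos Y=-2\sin\frac{X+Y}{2}\sin\frac{X-Y}{2}$. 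With it, your target inequality becomes $\sin(\alpha t)>B^2(t)\sin((2-\alpha)t)$, i.e. $\frac{\sin(\alpha t)}{\sin((2-\alpha)t)}>B^2(t)$. The left side is increasing on $(0,\pi/2)$ (ratio $\sin(at)/\sin(bt)$ with $a<b$), hence at least its limit $\frac{\alpha}{2-\alpha}$ at $t=0$; the right side is at most $B^2(\pi/2)=\left(\frac{\alpha}{2-\alpha}\right)^2<\frac{\alpha}{2-\alpha}$ since $B$ is positive and increasing by Lemma~\ref{l1}. That closes the argument in three lines.

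Your proposed fallback does not reliably substitute for this identity. The function $\Psi(t,\alpha)$ is not a polynomial in either variable (it involves the incommensurate frequencies $\alpha$, $1-\alpha$, $2-\alpha$, $2-2\alpha$ in $t$), so Budan's theorem does not apply to it directly; the paper invokes Budan only for genuine one-variable polynomials in $\alpha$ obtained \emph{after} an explicit algebraic simplification, and only at isolated values of $t$. Establishing positivity of a two-parameter transcendental expression over the full rectangle $(0,\pi/2)\times(0,1/2]$ by Taylor truncation with controlled remainders is a substantially harder task than you suggest, and you have not carried it out. So the proposal correctly sets up the problem but has a genuine gap at its core step; the single identity above is what you are missing.
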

\begin{proof}
Let us find $\ds\mu'(t)=\frac1{B^2(t)}\left(A'(t)B(t)(1+B(t))-A(t)B'(t)\right)$. Taking into account (Lemma~\ref{l1})
that $\ds A'(t)=\frac{\sin(2-\alpha)t}{\sin t}B'(t)>0$, we then have
\begin{gather*}
\mu'(t)=\frac1{B^2(t)}\left(A'(t)B(t)(1+B(t))-A(t)B'(t)\right)=\\
=\frac{B'(t)\sin(2-\alpha)t}{B^2(t)\sin t}\left(B^2(t)-\frac{A(t)\sin t-B(t)\sin(2-\alpha)t}{\sin(2-\alpha)t}\right).
\end{gather*}
Note that $A(t)\sin t-B(t)\sin(2-\alpha)t=\sin\alpha t$. Then 
$$
\mu'(t)=\frac{B'(t)\sin(2-\alpha)t}{B^2(t)\sin t}\left(B^2(t)-\frac{\sin\alpha t}{\sin(2-\alpha)t}\right).
$$
The function $\ds\frac{\sin\alpha t}{\sin(2-\alpha)t}$ is increasing when $t\in[0,\pi/2]$, hence
$\ds\frac{\sin\alpha t}{\sin(2-\alpha)t}\ge\frac{\alpha}{2-\alpha}$. On the other hand, \lb
$B^2(t)\le B^2\left(\frac{\pi}2\right)=\left(\frac{\alpha}{2-\alpha}\right)^2<\frac{\alpha}{2-\alpha}$. 
Therefore, $\mu'(t)<0$, $t\in[0,\pi/2]$, $\alpha\in(0,1/2]$. The lemma is proved.
\end{proof}

\begin{lemma}\label{l4}
For $t^*=\frac{\pi}{3-\alpha}$, we have
$
\ds\mu(t^*)=\frac{A(t^*)(1+B(t^*))}{4B(t^*)}>1.
$
\end{lemma}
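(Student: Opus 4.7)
My approach will be to reduce $\mu(t^*)>1$ to a polynomial inequality in $\alpha$ alone and then verify it by Budan's theorem, in direct analogy with the proof of Lemma~\ref{l2}.

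Since $b_0>0$ for $\alpha\in(0,1/2]$ (easy to check from the closed form), $\mu(t^*)>1$ is equivalent to $a_0(1+b_0)>4b_0$. I will use the identity $1-a_0+b_0=4\cos^2\theta$ from Section~3.4 (with $\theta=\pi/(3-\alpha)$) to eliminate $a_0=1+b_0-4\cos^2\theta$, so the inequality becomes
$$(1-b_0)^2>4\cos^2\theta\,(1+b_0).$$
Rewriting the closed form for $b_0$ via $\cos 2\theta=2\cos^2\theta-1$ gives $b_0=[3\alpha-1+4(1-\alpha)\cos^2\theta]/(3-\alpha)$, whence $1-b_0=4(1-\alpha)(1-k)/(3-\alpha)$ and $1+b_0=2[(1+\alpha)+2(1-\alpha)k]/(3-\alpha)$ with $k=\cos^2\theta$. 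Clearing denominators and simplifying collapses the inequality to
$$F(\alpha) := 2(1-\alpha)^2-(3\alpha^2-6\alpha+7)\,k-4(1-\alpha)\,k^2 > 0,\qquad \alpha\in(0,1/2].$$
The crucial feature is $F(0)=2-7/4-1/4=0$: the estimate is sharp at the left endpoint, so any bound on $k$ used in the argument must agree with $\cos^2\theta$ to cubic order at $\alpha=0$.

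The plan is to import the upper bound $k<G_1(\alpha)$ already constructed in the proof of Lemma~\ref{l2}, where $G_1$ is an explicit cubic in $\zeta=\pi\alpha/(3(3-\alpha))$. Since $3\alpha^2-6\alpha+7=3(\alpha-1)^2+4>0$ and $1-\alpha>0$, $F$ is strictly decreasing in $k$ on the relevant range, so substituting $G_1$ for $k$ yields $F(\alpha)>\tilde F(\alpha)$; multiplying through by $(3(3-\alpha))^6$ clears all denominators and produces a polynomial $\tilde R(\alpha)$ of degree $8$ (with coefficients in $\mathbb{Q}[\pi,\sqrt 3]$) of the same sign as $\tilde F$. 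A direct check gives $\tilde R(0)=0$, so I factor $\tilde R(\alpha)=\alpha R_1(\alpha)$ with $\deg R_1=7$, reducing matters to showing $R_1(\alpha)>0$ for $\alpha\in[0,1/2]$.

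For the final step I will follow the Lemma~\ref{l2} recipe verbatim: compute the coefficients of $R_1$, replace each by a rational approximation of the correct sign with enough decimals, form the Budan derivative sequence $\{R_1,R_1',\ldots,R_1^{(7)}\}$, and verify that the number of sign variations at $\alpha=0$ equals the number at $\alpha=1/2$, so Budan's theorem excludes zeros of $R_1$ on $[0,1/2]$; combined with $R_1(0)>0$ — equivalent to $F'(0)=\tfrac{\sqrt 3}{2}\pi-\tfrac{9}{4}\approx 0.47>0$ — this will give $R_1>0$ throughout and hence $\mu(t^*)>1$. The main obstacle is precisely the tight behavior at $\alpha=0$: because $F$, $\tilde F$, and $\tilde R$ all vanish there, the cubic accuracy of $G_1$ is essential to avoid a spurious root of $R_1$ near the origin, and the decimal truncation in the Budan step must be fine enough to capture the correct first-order behavior of $R_1$. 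This is the same delicate computation already performed in the proof of Lemma~\ref{l2}, and I expect the same strategy to carry through here.
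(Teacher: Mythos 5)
Your proposal is correct and follows essentially the same route as the paper: your quadratic $F(\alpha)$ in $k=\cos^2\frac{\pi}{3-\alpha}$ is exactly the paper's quadratic $\Phi(\alpha,z)$ in $z=\sin^2\frac{\pi}{3-\alpha}=1-k$, your monotonicity-in-$k$ observation mirrors the paper's monotonicity in $z$, the cubic bound you import from Lemma~\ref{l2} is the complementary form of the bound the paper uses, and the endgame (clear denominators, factor out $\alpha$ because of the tightness at $\alpha=0$, truncate coefficients, apply Budan's theorem on the derivative sequence) is precisely what the paper carries out, producing its degree-$7$ polynomial $\hat{R}$. The only thing left implicit in both your write-up and the paper is the sign $B(t^*)>0$ needed to pass from $A(t^*)(1+B(t^*))-4B(t^*)>0$ to $\mu(t^*)>1$, and the numerical Budan computation itself, which you defer but which the paper confirms succeeds.
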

\begin{proof}
We will be reasoning in the same way as in the proof of Lemma~\ref{l2}. Denote $\ds z=\sin^2\frac{\pi}{3-\alpha}$
and write
$$
A(t^*)=a_1=-2+\frac8{3-\alpha}z,\quad B(t^*)=b_1=1-\frac{4(1-\alpha)}{3-\alpha}z.
$$
Then 
\begin{gather*}
A(t^*)(1+B(t^*))-4B(t^*)=\\
=\frac8{(3-\alpha)^2}\left(-4(1-\alpha)z^2+(3\alpha^2-14\alpha+15)z-\alpha^2+6\alpha-9\right)=
\frac{\Phi(\alpha,z)}{(3-\alpha)^2}.
\end{gather*}

Calculate
$$
\frac{\partial}{\partial z}\Phi(\alpha,z)=-8(1-\alpha)z+3\alpha^2-14\alpha+15>3\alpha^2-14\alpha+7>\frac34>0.
$$
Therefore, the function $\Phi(\alpha,z)$ increases in $z$. Estimate $z$ from below. Write
$$
\sin\frac{\pi}{3-\alpha}=\sin\left(\frac{\pi}3+\frac{\pi}3\frac{\alpha}{3-\alpha}\right)=
\frac{\sqrt3}2\cos\frac{\pi\alpha}{3(3-\alpha)}+\frac12\sin\frac{\pi\alpha}{3(3-\alpha)},
$$
whence
\begin{gather*}
\sin^2\frac{\pi}{3-\alpha}=\frac34-\frac12\sin^2\frac{\pi\alpha}{3(3-\alpha)}+
\frac{\sqrt3}4\sin\frac{2\pi\alpha}{3(3-\alpha)}>\\
>\frac34+\frac{\sqrt3}2\frac{\pi\alpha}{3(3-\alpha)}-\frac12\left(\frac{\pi\alpha}{3(3-\alpha)}\right)^2-
\frac{\sqrt3}3\left(\frac{\pi\alpha}{3(3-\alpha)}\right)^3=G_1(\alpha).
\end{gather*}
Then $\Phi(\alpha,z)>\Phi(\alpha,G_1(\alpha))=\frac{100\alpha R(\alpha)}{(3-\alpha)^6}$, where $R(\alpha)=\sum\limits_{j=0}^7 b_j\alpha^j$ is a polynomial of degree 7, and the coefficients 
$b_0$, $b_2$, $b_4$, $b_6$ are positive, and $b_1$, $b_3$, $b_5$, $b_7$ are negative. These coefficients can be
calculated with the required degree of accuracy and then estimated by replacing them by smaller ones, for which we
may leave, for instance, two decimal places. We will finally obtain 
$$
\hat{R}(\alpha)=3.43-15.44\alpha+27.03\alpha^2-23.47\alpha^3+10.51\alpha^4-2.40\alpha^5
+0.26\alpha^6-0.02\alpha^7.
$$

Then $R(\alpha)>\hat{R}(\alpha)$ for $\alpha\ge0$. Let us form the sequence 
$\left\{\hat{R}(\alpha),\hat{R}'(\alpha),\hat{R}''(\alpha),\hat{R}'''(\alpha),\ldots,\hat{R}^{(7)}(\alpha)\right\}$
and count the number of sign variations in this sequence for $\alpha=0$ and $\alpha=0.5$. In both cases, this number
is equal to seven. {\it Budan's theorem} implies that the polynomial $\hat{R}(\alpha)$ does not have zeros in the 
interval $\alpha\in[0,1/2]$. Moreover, $\hat{R}(0)>0$. Therefore, $\hat{R}(\alpha)>0$ when $\alpha\in[0,1/2]$, which
gives the validity of inequality \eqref{inl2}. The lemma is proved.
\end{proof}

\begin{lemma}\label{l5}
For $t\in[0,\pi/2]$, $\alpha\in(0,1/2]$ we have $A(t)\ge2\sqrt2 B(t)$.
\end{lemma}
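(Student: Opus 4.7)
The plan is to convert the two-variable inequality $A(t) \geq 2\sqrt{2}\, B(t)$ into a one-variable bound in $\alpha$ by showing that, for each fixed $\alpha$, the ratio $A(t)/B(t)$ is monotone decreasing in $t$ on $(0, \pi/2]$, and then evaluating the worst case at $t = \pi/2$. Observe first that $B(t) > 0$ on $(0, \pi/2]$, since $B(0) = 0$, $B'(t) > 0$ by Lemma~\ref{l1}, and $B(\pi/2) = \alpha/(2-\alpha) > 0$, so division by $B$ is legitimate; the inequality is trivial at $t = 0$.

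Differentiating $A/B$ and using the relation $A'(t) = B'(t)\sin((2-\alpha)t)/\sin t$ from Lemma~\ref{l1} yields
$$\frac{d}{dt}\frac{A(t)}{B(t)} = -\frac{B'(t)}{B(t)^2 \sin t}\bigl[A(t)\sin t - B(t)\sin((2-\alpha)t)\bigr].$$
The bracket is exactly $\sin(\alpha t)$, the identity already derived in the proof of Lemma~\ref{l3}. Since every remaining factor is positive for $t \in (0, \pi/2]$ and $\alpha \in (0, 1/2]$, we get $(A/B)'(t) < 0$, and hence $A(t)/B(t) \geq A(\pi/2)/B(\pi/2) = (2/\alpha)\sin(\pi\alpha/2)$, using the closed-form values of $A(\pi/2)$ and $B(\pi/2)$ recorded in Section~2.

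The lemma then reduces to the one-variable inequality $\sin(\pi\alpha/2) \geq \sqrt{2}\,\alpha$ for $\alpha \in (0, 1/2]$. I would settle this by noting that $g(\alpha) := \sin(\pi\alpha/2) - \sqrt{2}\,\alpha$ is strictly concave on $[0, 1/2]$ (since $g''(\alpha) = -(\pi/2)^2 \sin(\pi\alpha/2) < 0$) and vanishes at both endpoints $\alpha = 0$ and $\alpha = 1/2$; strict concavity then forces $g \geq 0$ throughout the interval.

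The inequality is in fact sharp: equality holds at the corner $(t, \alpha) = (\pi/2, 1/2)$, where both sides equal $2\sqrt{2}/3$. The only real subtlety is recognizing the identity $A\sin t - B\sin((2-\alpha)t) = \sin(\alpha t)$, which makes $(A/B)'$ manifestly negative; without it, establishing monotonicity of $A/B$ directly from the raw parametric formulas for $U$, $V$, $W$ would be significantly messier.
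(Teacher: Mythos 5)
Your proof is correct and is essentially the paper's own argument: both establish monotonicity of the ratio of $A$ to $B$ via the relation $A'=B'\sin((2-\alpha)t)/\sin t$ and the identity $A\sin t-B\sin((2-\alpha)t)=\sin\alpha t$, then reduce to $\sin(\pi\alpha/2)\ge\sqrt2\,\alpha$ at $t=\pi/2$ (which the paper merely asserts and you justify by concavity). The only slip is the claim $B(0)=0$: in fact $B(0)=\lim_{t\to0}V/W=\alpha(1+\alpha)/((3-\alpha)(2-\alpha))>0$, but this is harmless since the inequality at $t=0$ still follows directly (or by continuity of the monotone ratio).
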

\begin{proof}
Let us calculate
$\left(\frac{B(t)}{A(t)}\right)'=\frac{B'(t)}{A^2(t)\sin t}\left(A(t)\sin t-B(t)\sin(2-\alpha)t\right)=
\frac{B'(t)}{A^2(t)}\frac{\sin\alpha t}{\sin t}>0$ (see Lemma~\ref{l3}). Hence, $\frac{B(t)}{A(t)}\le\frac{B(\pi/2)}{A(\pi/2)}=\frac{\alpha}{2\sin(\alpha\pi/2)}\le\frac1{2\sqrt2}$ for $\alpha \in (0,\frac{1}{2}]$. The lemma is proved.
\end{proof}

\begin{lemma}\label{l6}
For $\alpha \in (0,1/8]$, we have $\ds\frac{(2-\alpha-\sin(\pi\alpha)/2)^2}{2(2-\alpha)(4-2\alpha-\sin(\pi\alpha)/2)}>\cos^2\frac{\pi}{3-\alpha}$.
\end{lemma}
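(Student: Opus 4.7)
The plan is to follow the polynomial-bound / Budan's theorem template used for Lemmas~\ref{l2} and \ref{l4}, now over the narrower interval $\alpha\in(0,1/8]$. First I would bound the right-hand side from above exactly as in the proof of Lemma~\ref{l2}: writing $\cos\frac{\pi}{3-\alpha}=\frac12\cos\frac{\pi\alpha}{3(3-\alpha)}-\frac{\sqrt3}2\sin\frac{\pi\alpha}{3(3-\alpha)}$ and truncating Taylor series by alternating-series inequalities yields $\cos^2\frac{\pi}{3-\alpha}<G_1(\alpha)$ for the same cubic $G_1$ as before. For the quantity $\sin(\pi\alpha)/2$ appearing on the left, I would use the alternating-series upper bound $\sin(\pi\alpha)\le \pi\alpha-\frac{(\pi\alpha)^3}{6}+\frac{(\pi\alpha)^5}{120}=:\sigma(\alpha)$, which is accurate to many digits on $[0,1/8]$.

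Next, I would reduce to a polynomial inequality via monotonicity in the sine variable. A direct calculation gives $\frac{\partial}{\partial s}\frac{(2-\alpha-s)^2}{2(2-\alpha)(4-2\alpha-s)}=\frac{(2-\alpha-s)(3\alpha+s-6)}{2(2-\alpha)(4-2\alpha-s)^2}$, which is negative for $\alpha\in(0,1/2]$ and $s$ small enough that $2-\alpha-s>0$. Replacing the actual $s=\sin(\pi\alpha)/2$ by the larger $\sigma(\alpha)/2$ therefore only decreases the left-hand side, so it suffices to prove $\frac{(2-\alpha-\sigma(\alpha)/2)^2}{2(2-\alpha)(4-2\alpha-\sigma(\alpha)/2)}>G_1(\alpha)$ on $(0,1/8]$. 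Clearing the positive common denominator—which includes factors $2(2-\alpha)(4-2\alpha-\sigma(\alpha)/2)$ and $27(3-\alpha)^3$—converts this into $R(\alpha)>0$ for an explicit polynomial $R$ with coefficients in $\mathbb{Q}[\pi,\sqrt3]$. Since both sides of the original lemma equal $1/4$ at $\alpha=0$, the polynomial $R$ has a root at $\alpha=0$, so I would factor out that $\alpha$ and continue with $R(\alpha)/\alpha$.

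Finally, I would invoke Budan's theorem exactly as in Lemmas~\ref{l2} and \ref{l4}: round each coefficient of $R(\alpha)/\alpha$ down to two decimals to obtain a polynomial $\hat R(\alpha)$ with $R(\alpha)/\alpha\ge \hat R(\alpha)$ on $[0,1/8]$; compute the derivative sequence $\{\hat R,\hat R',\dots,\hat R^{(\deg \hat R)}\}$; count sign variations at $\alpha=0$ and $\alpha=1/8$; and verify that the counts coincide. Together with $\hat R(0)>0$, Budan's theorem then yields $\hat R(\alpha)>0$ throughout $[0,1/8]$, hence the desired inequality.

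The hard part, I expect, is preserving a positive margin. Both sides of the lemma equal $1/4$ at $\alpha=0$; a first-order Taylor expansion gives $\text{LHS}-\text{RHS}=\bigl(\frac{\sqrt3}{18}-\frac{3}{32}\bigr)\pi\alpha+O(\alpha^2)\approx 0.0025\pi\alpha$, which is quite small, and coarser substitutes such as $\sin(\pi\alpha)\le\pi\alpha$ or truncating $G_1$ after its linear term risk devouring this margin entirely. Using the full quintic bound on $\sin$ together with the complete cubic $G_1$ should be enough; once the bounds are sharp enough, the Budan endgame is routine numerical bookkeeping identical in spirit to the last steps of Lemmas~\ref{l2} and \ref{l4}.
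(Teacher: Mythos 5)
Your plan is essentially the paper's own proof: bound $\cos^2\frac{\pi}{3-\alpha}$ above by the same cubic $G_1(\alpha)$ from Lemma~\ref{l2}, bound the sine term above by an alternating-series truncation, use that $\frac{(2-\alpha-z)^2}{2(2-\alpha)(4-2\alpha-z)}$ decreases in $z$, clear denominators and a factor of $\alpha$ to get a degree-12 polynomial, under-round its coefficients, and finish with Budan's theorem on $[0,1/8]$; your first-order margin computation $\bigl(\tfrac{\sqrt3}{18}-\tfrac{3}{32}\bigr)\pi\alpha$ correctly identifies why the full truncations are needed. The one discrepancy is your reading of the sine term: the paper's proof bounds $\sin\frac{\pi\alpha}{2}$ by $G_2(\alpha)=\frac{\pi\alpha}2-\frac{(\pi\alpha)^3}{48}+\frac{(\pi\alpha)^5}{3840}$, because the quantity written as $\sin(\pi\alpha)/2$ is in fact $\sin\frac{\pi\alpha}{2}$, arising from $A(\pi/2)=\frac{2}{2-\alpha}\sin\frac{\pi\alpha}{2}$ in Section 3.5.3; your literal reading $\frac12\sin(\pi\alpha)$ is smaller, so by the very monotonicity you establish you would be proving a strictly weaker inequality than the one the main argument consumes. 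Since both readings agree to first order, your argument transfers verbatim to the intended statement by replacing $\sigma(\alpha)/2$ with $G_2(\alpha)$.
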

\begin{proof}
It follows from Lemma~\ref{l2} that
\begin{gather*}
\cos^2\frac{\pi}{3-\alpha}<\frac14-\frac{\sqrt3}2\frac{\pi\alpha}{3(3-\alpha)}
+\frac12\left(\frac{\pi\alpha}{3(3-\alpha)}\right)^2+
\frac{\sqrt3}3\left(\frac{\pi\alpha}{3(3-\alpha)}\right)^3=G_1(\alpha),\\
\sin\frac{\pi\alpha}2<\frac{\pi\alpha}2-\frac{(\pi\alpha)^3}{48}+\frac{(\pi\alpha)^5}{3840}=G_2(\alpha).
\end{gather*}
The function $\ds\frac{(2-\alpha-z)^2}{2(2-\alpha)(4-2\alpha-z)}$ decreases in $z$. This means that 
$$
\frac{(2-\alpha-\sin(\pi\alpha)/2)^2}{2(2-\alpha)(4-2\alpha-\sin(\pi\alpha)/2)}>
\frac{(2-\alpha-G_2(\alpha))^2}{2(2-\alpha)(4-2\alpha-G_2(\alpha))}.
$$
It suffices to show that $\ds\frac{(2-\alpha-G_2(\alpha))^2}{2(2-\alpha)(4-2\alpha-G_2(\alpha))}>G_1(\alpha)$ when
$\alpha\in(0,1/8]$. Consider the polynomial
$$
\frac{(3-\alpha)^3}{\alpha}\left[(2-\alpha-G_2(\alpha))^2-2(2-\alpha)(4-2\alpha-G_2(\alpha))G_1(\alpha)\right]=
R(\alpha).
$$
$R(\alpha)=\sum\limits_{j=0}^{12}b_j\alpha^j$ is a polynomial of degree 12 with the positive coefficients
$b_0$, $b_2$, $b_4$, $b_5$, $b_7$, $b_8$, $b_{11}$ and the negative coefficients $b_1$, $b_3$, $b_6$, $b_9$,
$b_{10}$, $b_{12}$. These coefficients can be calculated with the required degree of accuracy and then estimated
by replacing them by smaller ones if leaving, for example, two decimal places. 
As a result, we obtain
\begin{gather*}
\hat{R}(\alpha)=3.35-37.79\alpha+108.54\alpha^2-132.33\alpha^3+63.91\alpha^4
+7.38\alpha^5-17.88\alpha^6+\\
+3.16\alpha^7+2.11\alpha^8
-0.76\alpha^9-0.07\alpha^{10}+0.05\alpha^{11}-0.01\alpha^{12}.
\end{gather*}

Then $R(\alpha)>\hat{R}(\alpha)$ when $\alpha\ge0$. Let us form the sequence 
$\left\{\hat{R}(\alpha),\hat{R}'(\alpha),\hat{R}''(\alpha),\hat{R}'''(\alpha),\ldots,\hat{R}^{(12)}(\alpha)\right\}$
and count the number of sign variations in this sequence for $\alpha=0$ and $\alpha=0.125$. In both cases, this number
is equal to nine. It follows from {\it Budan's theorem} that the polynomial $\hat{R}(\alpha)$ does not have zeros in the 
interval $\alpha\in[0,1/8]$. Moreover, $\hat{R}(0)>0$. Therefore, $\hat{R}(\alpha)>0$ when $\alpha\in[0,1/8]$, which 
implies that the statement of Lemma is true.   
\end{proof}

\section{Acknowledgements}

The authors would like to thank Larie Ward for her help in preparation of this manuscript.

\bibliographystyle{amsplain}
\bibliography{koebebib}

\end{document}